\documentclass[a4paper,12pt]{amsart}

% LATEX PACKAGES

\usepackage{amsmath}
\usepackage{amsthm}
\usepackage{amssymb}
\usepackage{amsfonts}
\usepackage{enumitem} % more enumerate/itemize options
\usepackage{color}
\usepackage[bookmarks=true,hyperindex,pdftex,colorlinks,citecolor=blue]{hyperref}
\usepackage{geometry}
\usepackage{marginnote}
%\usepackage{showlabels} % show labels of theorems, equations, ...

% OPERATORS AND MACROS

    % changes by Eva
 % changes by Ramon
     % marker for placeholder or temporary content
%\newcommand{\tbd}[1]{}                     % uncomment to hide
%\newcommand{\tbd}[1]{#1}                   % uncomment to unmark

\DeclareMathOperator{\Lip}{Lip}     % Lipschitz functions
\DeclareMathOperator{\lip}{lip}     % little Lipschitz functions
\DeclareMathOperator{\ext}{ext}     % set of extreme points
\DeclareMathOperator{\diam}{diam}   % diameter
   % distance between sets
\DeclareMathOperator{\lspan}{span}  % linear span
   % convex hull
\DeclareMathOperator{\supp}{supp}   % support

\newcommand{\abs}[1]{\left|{#1}\right|}                     % absolute value
\newcommand{\pare}[1]{\left({#1}\right)}                    % parentheses
\newcommand{\set}[1]{\left\{{#1}\right\}}                   % set by extension
\newcommand{\norm}[1]{\left\|{#1}\right\|}                  % norm
\newcommand{\dual}[1]{{#1}^\ast}                            % dual or adjoint
\newcommand{\ddual}[1]{{#1}^{\ast\ast}}                     % double dual
\newcommand{\ball}[1]{B_{{#1}}}                             % unit ball
                           % unit sphere
\newcommand{\duality}[1]{\left<{#1}\right>}                 % dual action
\newcommand{\cl}[1]{\overline{#1}}                          % closure
                       % weak closure
\newcommand{\wscl}[1]{\overline{#1}^{\dual{w}}}             % weak* closure
                              % related to the weak topology
\newcommand{\weaks}{\textit{w}$^\ast$}                      % related to the weak* topology
              % weak convergence
\newcommand{\wsconv}{\stackrel{\dual{w}}{\longrightarrow}}  % weak* convergence
%\newcommand{\pconv}{\stackrel{p}{\longrightarrow}}          % pointwise convergence
%\newcommand{\tauconv}{\stackrel{\tau}{\longrightarrow}}     % convergence wrt topology tau

% specific to this document
\newcommand{\lipfree}[1]{\mathcal{F}({#1})}  % Lipschitz-free space
\newcommand{\lipnorm}[1]{\norm{#1}_L}        % Lipschitz norm/constant
\newcommand{\mol}[1]{u_{#1}}                 % elementary molecules
\newcommand{\embd}{\delta}                   % isometric embedding into free space
\newcommand{\slack}[1]{\varepsilon({#1})}    % metric slack in triangle inequality (aka Gromov product)
\newcommand{\ideal}[1]{\mathcal{I}({#1})}    % ideal of Lipschitz functions vanishing on a set
\newcommand{\hull}[1]{\mathcal{H}({#1})}     % hull of a space of Lipschitz functions
\newcommand{\setd}[1]{\mathcal{D}_{#1}}      % set D in compactification of square
\newcommand{\sete}[1]{\mathcal{E}_{#1}}      % set E in free space or its bidual

% THEOREM ENVIRONMENTS

\theoremstyle{plain}
\newtheorem{theorem}{Theorem}[section]
\newtheorem{lemma}[theorem]{Lemma}
\newtheorem{corollary}[theorem]{Corollary}
\newtheorem{proposition}[theorem]{Proposition}
\newtheorem*{claim}{Claim}

\theoremstyle{definition}
\newtheorem*{definition*}{Definition}
\newtheorem{definition}[theorem]{Definition}
\newtheorem{example}[theorem]{Example}
\newtheorem{question}{Question}

\theoremstyle{remark}

\newtheorem*{remark*}{Remark}

% DOCUMENT TITLE

\begin{document}

\title{Supports and extreme points in Lipschitz-free spaces}

\author[R. J. Aliaga]{Ram\'on J. Aliaga}
\address[R. J. Aliaga]{Instituto Universitario de Matem\'atica Pura y Aplicada, Universitat Polit\`ecnica de Val\`encia, Camino de Vera S/N, 46022 Valencia, Spain}
\email{raalva@upvnet.upv.es}

\author[E. Perneck\'a]{Eva Perneck\'a}
\address[E. Perneck\'a]{Faculty of Information Technology, Czech Technical University in Prague, Th\'akurova 9, 160 00, Prague 6, Czech Republic}
\email{perneeva@fit.cvut.cz}

%\date{} % uncomment to remove date from title

% ABSTRACT

\begin{abstract}
For a complete metric space $M$, we prove that the finitely supported extreme points of the unit ball of the Lipschitz-free space $\lipfree{M}$ are precisely the elementary molecules $(\embd(p)-\embd(q))/d(p,q)$ defined by pairs of points $p,q$ in $M$ such that the triangle inequality $d(p,q)<d(p,r)+d(q,r)$ is strict for any $r\in M$ different from $p$ and $q$. To this end, we show that the class of Lipschitz-free spaces over closed subsets of $M$ is closed under arbitrary intersections when $M$ has finite diameter, and that this allows a natural definition of the support of elements of $\lipfree{M}$.
\end{abstract}

% KEYWORDS

\subjclass[2010]{Primary 46B20; Secondary 54E50}
%46B20 (Geometry and structure of normed linear spaces)
%54E50 (Complete metric spaces)

\keywords{Lipschitz-free space, extreme point, support}

\maketitle

% MAIN DOCUMENT

\section{Introduction}

Let $\Lip_0(M)$ denote the space of real-valued Lipschitz functions on a pointed metric space $(M,d)$ (i.e. one with a designated base point) that vanish at the base point, endowed with the Lipschitz norm
$$
\lipnorm{f}=\sup\set{\frac{f(p)-f(q)}{d(p,q)}:p\neq q\in M}.
$$Then $\Lip_0(M)$ is a dual Banach space and the map $\embd$ that assigns to each $x\in M$ its evaluation functional $\embd(x)\colon f\mapsto f(x)$ embeds $M$ isometrically into $\dual{\Lip_0(M)}$. Moreover, these functionals span a space $\lipfree{M}=\cl{\lspan}\,\embd(M)$ that can be canonically identified with a predual of $\Lip_0(M)$. The spaces $\lipfree{M}$ were prominently featured as ``Arens-Eells spaces'' in the authoritative monograph \cite{Weaver} due to Weaver. Later, the study of their applications in nonlinear geometry of Banach spaces was initiated in \cite{GoKa_2003} by Godefroy and Kalton, who also introduced the name \mbox{\emph{Lipschitz-free spaces}} based on their universal property. We refer to \cite{Weaver} for basic facts on Lipschitz and Lipschitz-free spaces, and to the survey \cite{Godefroy_2015} and the references therein for more recent progress in understanding their Banach space properties.

The linear structure of Lipschitz-free spaces is not straightforward to analyze and has been the subject of vigorous recent research efforts. In particular, the extremal structure of their unit ball has not yet been completely described. The first important step in this direction was Weaver's proof that any preserved extreme point of $\ball{\lipfree{M}}$ must be an \emph{elementary molecule} \cite{Weaver_1995}, that is, an element of the form
$$
\mol{pq}:=\frac{\embd(p)-\embd(q)}{d(p,q)}
$$
for some $p\neq q\in M$; note that $\norm{\mol{pq}}=1$. This allows us to restate the problem of characterizing certain types of extreme points as finding equivalent geometric conditions on pairs of points $p,q$ in $M$. It is easy to see that one such necessary condition is that the \emph{metric segment}
$$
[p,q]:=\set{x\in M:d(p,q)=d(p,x)+d(q,x)}
$$
consists only of the points $p$ and $q$.

Progress in this direction was mostly stalled until very recently. In \cite{GaPrRu_2018}, Garc\'ia-Lirola, Proch\'azka and Rueda Zoca gave a complete geometric characterization of the strongly exposed points of $\ball{\lipfree{M}}$ (see Theorem \ref{tm:prext_strexp}\ref{tm:prext_strexp:strexp}). In \cite{AlGu_2019}, the first author and Guirao gave a similar geometric characterization of preserved extreme points (see Theorem \ref{tm:prext_strexp}\ref{tm:prext_strexp:prext}), and asked whether extreme points could be described analogously. In particular, they asked if it is true that $u_{pq}$ is extreme if and only if $[p,q]=\set{p,q}$ (see Question 1 in \cite{AlGu_2019}). The answer is positive when $M$ is compact by Theorem 4.2 in \cite{AlGu_2019}. Concurrently, Garc\'ia-Lirola, Petitjean, Proch\'azka and Rueda Zoca proved in \cite{GPPR_2018} that all preserved extreme points of $\ball{\lipfree{M}}$ are denting points, and gave a positive answer to Question 1 in \cite{AlGu_2019} for bounded, uniformly discrete $M$.

Our main result in this note is that the answer to Question 1 in \cite{AlGu_2019} is positive for any complete metric space $M$, that is:

\begin{theorem}
\label{tm:main_theorem}
Let $M$ be a complete pointed metric space and let $\mu\in\lipfree{M}$ be finitely supported, i.e. $\mu\in\lspan\,\embd(M)$. Then $\mu$ is an extreme point of $\ball{\lipfree{M}}$ if and only if $\mu=\mol{pq}$ for distinct points $p,q$ in $M$ such that $[p,q]=\set{p,q}$.
\end{theorem}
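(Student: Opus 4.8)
The proof splits into the two implications. The necessity part (``$\mu$ extreme $\Rightarrow$ $\mu=\mol{pq}$ with $[p,q]=\set{p,q}$'') is soft. Since $\mu\in\lspan\,\embd(M)$, it already lies in $\lipfree{N}$ for some finite pointed subset $N\subseteq M$, and $\lipfree{N}$ embeds isometrically into $\lipfree{M}$ by McShane extension. Extreme points of a ball remain extreme in the ball of any subspace containing them, so $\mu$ is extreme in $\ball{\lipfree{N}}$; as $\lipfree{N}$ is finite-dimensional, $\mu$ is then a \emph{preserved} extreme point of $\ball{\lipfree{N}}$, hence an elementary molecule $\mol{pq}$ by Weaver's theorem \cite{Weaver_1995}. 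Finally, if some $r\in[p,q]$ were different from $p$ and $q$, then $d(p,r),d(r,q)\in(0,d(p,q))$ and $\mol{pq}=\tfrac{d(p,r)}{d(p,q)}\mol{pr}+\tfrac{d(r,q)}{d(p,q)}\mol{rq}$ would be a proper convex combination of two distinct norm-one vectors (distinct point-pairs yield distinct molecules, by linear independence of the $\embd$-images of distinct non-base points), contradicting extremality; hence $[p,q]=\set{p,q}$.

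For sufficiency, assume $[p,q]=\set{p,q}$ and suppose $\mol{pq}=\tfrac12(\mu_1+\mu_2)$ with $\mu_1,\mu_2\in\ball{\lipfree{M}}$; we must show $\mu_1=\mu_2=\mol{pq}$. The crucial claim is that $\supp\mu_1,\supp\mu_2\subseteq\set{p,q}$. If $r\notin\set{p,q}$ and $\rho>0$ is small enough that $p,q\notin B(r,\rho)$, then every $f\in\Lip_0(M)$ vanishing off $B(r,\rho)$ satisfies $\duality{\mol{pq},f}=0$, hence $\duality{\mu_1,f}=-\duality{\mu_2,f}$; in particular $r\in\supp\mu_1$ iff $r\in\supp\mu_2$. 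Assume for contradiction such an $r$ lies in both supports and put $\alpha=d(p,r)+d(q,r)-d(p,q)$, which is positive since $[p,q]=\set{p,q}$. Choose $\eta\in(0,\alpha/(d(p,q)+\alpha))$ and set
$$
g(x)=\min\set{d(x,q),\ (1-\eta)\pare{d(x,r)+d(r,q)}}.
$$
Then $\lipnorm{g}\le1$, $g(q)=0$, and $g(p)=\min\set{d(p,q),(1-\eta)(d(p,q)+\alpha)}=d(p,q)$ (here the choice of $\eta$ is used); moreover $g$ agrees with $(1-\eta)d(\cdot,r)+(1-\eta)d(r,q)$ on a small ball around $r$. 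A routine estimate — splitting pairs $(x,y)$ according to whether they meet $B(r,\rho)$, and, when they do, whether $x,y$ are at small, bounded, or large distance — shows that for $\rho$ small and for every $f\in\Lip_0(M)$ with $\lipnorm{f}\le1$ vanishing off $B(r,\rho)$, one has $\lipnorm{g\pm tf}\le1$ for all sufficiently small $t>0$. After subtracting a constant so that $g\in\Lip_0(M)$ (which changes neither $\lipnorm{g}$ nor pairings with $\lipfree{M}$) we have $\duality{\mol{pq},g\pm tf}=1$, since $g(p)-g(q)=d(p,q)$ while $\duality{\mol{pq},f}=0$; as $\norm{\mu_i}\le1$ this forces $\duality{\mu_i,g\pm tf}=1$ for $i=1,2$, whence $\duality{\mu_1,f}=0$ for every such $f$ — contradicting $r\in\supp\mu_1$. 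This establishes the claim.

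By the support theory developed above, in particular the intersection theorem guaranteeing that every element of $\lipfree{M}$ is supported on its support, the claim gives $\mu_1,\mu_2\in\lipfree{\set{p,q,\text{base point}}}$, a space of dimension at most two in which $[p,q]$ is still $\set{p,q}$. There $\mol{pq}$ is an extreme point — by the compact case (Theorem 4.2 of \cite{AlGu_2019}), or by inspecting an at most two-dimensional polytope — so $\mu_1=\mu_2=\mol{pq}$, completing the proof. I expect the main obstacle to be exactly the construction in the preceding paragraph: producing a $1$-Lipschitz function that attains slope $1$ on the pair $(p,q)$ yet has strictly sub-unit local slope near an arbitrary point $r$ off the (trivial) segment, and verifying that localized perturbations of it stay inside the unit ball. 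This is the step that genuinely exploits the strict triangle inequality $d(p,q)<d(p,r)+d(q,r)$ (through $\alpha>0$) together with the good behaviour of Lipschitz-free spaces over closed — hence complete — subsets of $M$; care is needed because strictness at $r$ alone only yields $\operatorname{lip}g(r)<1$, whereas the perturbation argument requires $g$ to be uniformly sub-Lipschitz on a whole neighbourhood of $r$.
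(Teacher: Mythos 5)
Your necessity argument is essentially the paper's: pass to the finite-dimensional free space over the (finite) support, where extreme equals preserved extreme, invoke Weaver's result to get an elementary molecule, and split $\mol{pq}$ along any interior point of the metric segment. That part is fine.

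The sufficiency direction contains a genuine gap at the globalization step. Granting your perturbation argument, for each $r\notin\set{p,q}$ you obtain $\duality{\mu_i,f}=0$ for every $f$ vanishing off $B(r,\rho_r)$, i.e. $\mu_i\in\ideal{M\setminus B(r,\rho_r)}_\perp=\lipfree{M\setminus B(r,\rho_r)}$. To conclude $\supp\mu_i\subset\set{p,q}$ you must pass from this family of statements to $\mu_i\in\lipfree{\bigcap_r\pare{M\setminus B(r,\rho_r)}}=\lipfree{\set{p,q,0}}$, which is exactly the intersection property for the closed sets $M\setminus B(r,\rho_r)$. Theorem \ref{tm:f_intersection} establishes that property only for \emph{bounded} $M$ (its proof needs $\Lip_0(M)$ to be an algebra), the sets $M\setminus B(r,\rho_r)$ are unbounded whenever $M$ is, and whether the intersection property --- equivalently, the existence of supports --- holds for unbounded complete $M$ is precisely Question \ref{q:supp}, left open in the paper. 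Indeed, even the phrase ``$r\in\supp\mu_1$'' presupposes that $\supp\mu_1$ exists. So as written your argument proves the theorem only for bounded $M$. The paper's proof circumvents this: using the de Leeuw transform, the sets $\setd{pq}$ and $\sete{pq}$, and the quantitative measure bound on $\beta\widetilde{M}$ (Lemmas \ref{lm:dpq_full_version}--\ref{lm:convex_comb_mol}), it first shows that any $m$ with $\mol{pq}=\frac{1}{2}(m+m')$ lies in $\lipfree{S_n}$ for $S_n=\set{x:\slack{x;p,q}\le 1/n}$; since all $S_n$ sit inside the \emph{bounded} set $S_1$, Theorem \ref{tm:f_intersection} can then be applied inside $S_1$. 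A secondary, repairable issue: the ``routine estimate'' $\lipnorm{g\pm tf}\le 1$ is not routine for mixed pairs $x\in B(r,\rho)$, $y\notin B(r,\rho)$ --- uniform sub-Lipschitzness of $g$ on $B(r,\rho)$ does not suffice (if $\Phi g(x,y)=1$ for such a pair, the norm exceeds $1$ for every $t>0$); one needs the stronger gap $\abs{g(x)-g(y)}\le d(x,y)-\eta\,\dist(x,M\setminus B(r,\rho))$, which your particular $g$ does satisfy for $\rho$ small relative to $\eta\, d(r,q)$, but this must be verified explicitly.
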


\noindent The proof relies on a refinement of the methods used in \cite{AlGu_2019} for obtaining the characterization of preserved extreme points, but an additional key observation is required: the fact that the class of Lipschitz-free spaces over closed subsets of $M$, considered as subspaces of $\lipfree{M}$, is closed under intersections when $M$ is bounded (see Theorem \ref{tm:f_intersection}). We prove this by considering the algebra structure of \mbox{$\Lip_0(M)$ --- we} show that the \weaks-closure of any ideal in $\Lip_0(M)$ is also an ideal and appeal to results in Chapter 4 of \cite{Weaver}. As another consequence of this fact we show that, for a bounded $M$, the support of  any element of $\lipfree{M}$ can be defined in a natural way.

\section{Preliminaries}

We will now briefly describe the notation used throughout the text. $M$ will be a complete pointed metric space with metric $d$. We will write $\diam(M)$ for the (possibly infinite) diameter of $M$, and denote
$$
\slack{x;p,q}:=d(p,x)+d(q,x)-d(p,q)
$$
for $p,q,x\in M$. Note that $\slack{x;p,q}\geq 0$, and $\slack{x;p,q}=0$ if and only if $x\in[p,q]$. We will also consider the Stone-\v{C}ech compactification $\beta M$ of $M$ and the fact that any real-valued continuous function on $M$ may be extended continuously and uniquely to a function on $\beta M$, possibly by adding $\pm\infty$ to its range. In particular, for $\xi\in\beta M$ and $p,q\in M$, $d(\xi,p)$ and $\slack{\xi;p,q}$ are well-defined values in $[0,\infty]$. Moreover, by an argument given in the proof of Proposition 2.1.6 in \cite{Weaver}, $\inf_{p\in M}d(\xi,p)>0$ for any $\xi\in\beta M\setminus M$.

For any subset $N$ of $M$, it is well known that any element of $\Lip_0(N)$ may be extended to an element of $\Lip_0(M)$ without increasing its Lipschitz norm or its supremum, and that therefore $\lipfree{N}$ can be identified with a subspace of $\lipfree{M}$. Namely, there exists a linear isometric embedding $\hat{\iota}:\,\lipfree{N}\to \lipfree{M}$, induced by the identity mapping $i:\,N\to M$, such that $\hat{\iota}(\embd_N(x))=\embd_M(i(x))$ for every $x\in N$; here $\embd_N$ and $\embd_M$ are the embeddings of the respective metric spaces into the corresponding Lipschitz-free spaces. Hence, $\lipfree{N}$ is linearly isometric to the closed subspace of $\lipfree{M}$ spanned by the set $\embd_M(N)$ (see e.g. p. 91 in \cite{Godefroy_2015}), and we shall frequently identify them.

We will also use the following known fact about representation of elements of $\lipfree{M}$. We include a proof for the sake of completeness.

\begin{lemma}
\label{lm:l1_series_argument}
Let $M$ be a pointed metric space and $\mu\in\lipfree{M}$. For every $\varepsilon>0$, there exist sequences $(a_n)$ in $\mathbb{R}$ and $(p_n),(q_n)$ in $M$, with $p_n\neq q_n$ for all $n\in\mathbb{N}$, such that $\mu=\sum_{n=1}^\infty a_n\mol{p_nq_n}$ and $\sum_{n=1}^\infty\abs{a_n}<\norm{\mu}+\varepsilon$.
\end{lemma}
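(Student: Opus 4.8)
The plan is to prove Lemma~\ref{lm:l1_series_argument} by a standard successive-approximation argument, exploiting the density of $\lspan\,\embd(M)$ in $\lipfree{M}$ together with the fact that every element of $\lspan\,\embd(M)$ is a finite linear combination of elementary molecules with good control on the coefficients.

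First I would record the elementary observation that any $\nu\in\lspan\,\embd(M)$ can be written as $\nu=\sum_{k=1}^{m}b_k\,\mol{x_ky_k}$ with $\sum_{k=1}^m\abs{b_k}$ as close as we like to $\norm{\nu}$. Indeed, writing $\nu=\sum_{j} c_j\,\embd(z_j)$ and viewing $\nu$ as a finitely supported ``molecule'' on the finite pointed metric space $N=\set{z_j}\cup\set{0}$, the space $\lipfree{N}$ is finite-dimensional and its unit ball is the convex hull of the finitely many molecules $\mol{xy}$, $x\ne y\in N$ (this is classical; alternatively it follows from the De Leeuw transform / the fact that the extreme points of $\ball{\lipfree{N}}$ lie among the molecules). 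Hence $\nu/\norm{\nu}$ is a convex combination $\sum_k \lambda_k\,\mol{x_ky_k}$, so $\nu=\sum_k(\norm{\nu}\lambda_k)\,\mol{x_ky_k}$ with $\sum_k\norm{\nu}\lambda_k=\norm{\nu}$. (If one prefers to avoid invoking the structure of $\ball{\lipfree{N}}$, the same conclusion can be reached by duality: the norm of $\nu$ equals its norm as a functional on $\Lip_0(N)$, and an elementary compactness/linear-programming argument on the finite-dimensional space $\Lip_0(N)$ produces such a representation with $\sum_k\abs{b_k}\le\norm{\nu}+\eta$ for any prescribed $\eta>0$.)

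Next, the iteration. Fix $\varepsilon>0$ and $\mu\in\lipfree{M}$. Choose $\nu_1\in\lspan\,\embd(M)$ with $\norm{\mu-\nu_1}<\varepsilon/4$ and $\norm{\nu_1}<\norm{\mu}+\varepsilon/4$, then by the previous paragraph write $\nu_1=\sum_{k}b_k^{(1)}\mol{x_ky_k}$ as a finite sum with $\sum_k\abs{b_k^{(1)}}<\norm{\mu}+\varepsilon/2$. Having approximated $\mu$ by the partial sum $\nu_1$, apply the same procedure to the remainder $\mu-\nu_1\in\lipfree{M}$: pick $\nu_2\in\lspan\,\embd(M)$ with $\norm{(\mu-\nu_1)-\nu_2}<\varepsilon/8$ and $\norm{\nu_2}<\norm{\mu-\nu_1}+\varepsilon/8<\varepsilon/4+\varepsilon/8$, and expand $\nu_2$ as a finite sum of molecules with total coefficient mass $<\varepsilon/4+\varepsilon/4=\varepsilon/2$ (using $\eta$ small). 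Continuing, at step $n\ge 2$ we obtain $\nu_n\in\lspan\,\embd(M)$ with $\norm{\mu-\sum_{j=1}^n\nu_j}<\varepsilon\,2^{-n-1}$ and, using $\norm{\nu_n}\le\norm{\mu-\sum_{j=1}^{n-1}\nu_j}+\varepsilon\,2^{-n-1}<\varepsilon\,2^{-n}$, a finite molecular expansion of $\nu_n$ whose coefficients have absolute sum $<\varepsilon\,2^{-n}$. Concatenating all these finite lists of triples $(b_k^{(n)},x_k,y_k)$ into single sequences $(a_n)$, $(p_n)$, $(q_n)$ — discarding any terms with zero coefficient so that we may take $p_n\ne q_n$ — we get $\sum_n\abs{a_n}<(\norm{\mu}+\varepsilon/2)+\sum_{n\ge 2}\varepsilon\,2^{-n}<\norm{\mu}+\varepsilon$, and the partial sums of $\sum_n a_n\mol{p_nq_n}$ include the partial sums $\sum_{j=1}^n\nu_j\to\mu$; since the series is absolutely convergent and $\lipfree{M}$ is complete, the whole series converges, necessarily to $\mu$.

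I do not expect a genuine obstacle here; the only point requiring a little care is the first step — justifying that a finitely supported element admits a molecular representation with coefficient mass essentially equal to its norm. The cleanest route is to quote the finite-dimensional fact that $\ball{\lipfree{N}}=\conv\set{\mol{xy}:x\ne y\in N}$ for finite $N$; one should make sure the base point is included in $N$ so that this applies verbatim and the identification of $\lipfree{N}$ as a subspace of $\lipfree{M}$ (recalled in the Preliminaries) transfers the representation back to $\lipfree{M}$ isometrically. Everything else is the routine telescoping/absolute-convergence bookkeeping sketched above.
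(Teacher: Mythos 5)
Your argument is correct in substance and follows essentially the same route as the paper: decompose $\mu$ as an absolutely convergent series of finitely supported elements $\nu_n$ with $\sum_n\norm{\nu_n}$ close to $\norm{\mu}$, then expand each $\nu_n$ into finitely many molecules with near-optimal coefficient mass and concatenate. The paper simply cites Lemma 3.100 of \cite{FHHMZ_2011} for the first step (which is exactly your successive-approximation iteration) and the norm formula on $\lspan\,\embd(M)$ from \cite{CDW_2016} for the second (equivalent to your convex-hull/duality argument on finite subsets). One arithmetic slip: from $\norm{\mu-\sum_{j=1}^{n-1}\nu_j}<\varepsilon 2^{-n}$ you only get $\norm{\nu_n}<\varepsilon 2^{-n}+\varepsilon 2^{-n-1}$, not $<\varepsilon 2^{-n}$, so the constants as written sum to roughly $\norm{\mu}+\tfrac{5}{4}\varepsilon$ rather than $\norm{\mu}+\varepsilon$; tightening the tolerances in the tail (e.g.\ aiming for $\varepsilon 2^{-n-2}$ at each stage) fixes this without changing anything essential.
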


\begin{proof}
Let $\mu\in\lipfree{M}$ and $\varepsilon>0$. By Lemma 3.100 in \cite{FHHMZ_2011}, there exists a sequence $(\mu_n)\subset\lspan\,\embd(M)$ such that $\mu=\sum_{n=1}^\infty\mu_n$ and 
$$\sum_{n=1}^\infty\|\mu_n\| < \|\mu\|+\frac{\varepsilon}{2}.$$ Since the $\lipfree{M}$-norm on $\lspan\,\embd(M)$ can be computed by the formula 
$$\|\nu\|=\inf\left\{\sum_{i=1}^I|a_i|\colon \nu=\sum_{i=1}^Ia_i\frac{\delta(p_i)-\delta(q_i)}{d(p_i,q_i)}, I\in\mathbb{N}, a_i\in\mathbb{R}, p_i, q_i\in M, p_i\neq q_i\right\}$$
for each $\nu\in\lspan\,\embd(M)$ (see Section 2 in \cite{CDW_2016}), for each $n\in\mathbb{N}$ we can find a representation 
$$\mu_n=\sum_{i=1}^{I_n}a^n_i\frac{\delta(p^n_i)-\delta(q^n_i)}{d(p^n_i,q^n_i)}$$ such that $$\sum_{i=1}^{I_n}|a_i^n|<\|\mu_n\|+\frac{\varepsilon}{2^{n+1}}.$$
We re-index the sequences $(a_i^n)_{n,i}$, $(p_i^n)_{n,i}$ and $(q_i^n)_{n,i}$ as $(a_j)_{j=1}^\infty$, $(p_j)_{j=1}^\infty$ and $(q_j)_{j=1}^\infty$, respectively. Then 
\begin{align*}
\sum_{j=1}^{\infty}|a_j|=\sum_{n,i}|a_i^n|&=\sum_{n=1}^\infty\sum_{i=1}^{I_n}|a_i^n|\leq\sum_{n=1}^\infty\left(\|\mu_n\|+\frac{\varepsilon}{2^{n+1}}\right) < \|\mu\|+\varepsilon.
\end{align*}
Hence $(a_j)\in\ell_1$ and the series $\sum_{j=1}^\infty a_j \frac{\delta(p_j)-\delta(q_j)}{d(p_j,q_j)}$ converges absolutely in $\lipfree{M}$. Moreover, 
$$\mu=\sum_{j=1}^\infty a_j \frac{\delta(p_j)-\delta(q_j)}{d(p_j,q_j)}.$$ Indeed, for any $\xi>0$ find $N\in\mathbb{N}$ such that for every $m\geq N$ we have that $\max\left\{\left\|\mu-\sum_{j=1}^m\mu_j\right\|,\|\mu_m\|,\frac{\varepsilon}{2^m}\right\}<\xi$. Then for every $n>\sum_{k=1}^NI_k$ we also get
\begin{align*}
\left\|\mu-\sum_{j=1}^n a_j \frac{\delta(p_j)-\delta(q_j)}{d(p_j,q_j)}\right\|&\leq\left\|\mu-\sum_{j=1}^{m-1} \mu_j\right\|+\sum_{i=1}^{I_m}|a_i^m|\\
&\leq\left\|\mu-\sum_{j=1}^{m-1} \mu_j\right\|+\left\|\mu_m\right\|+\frac{\varepsilon}{2^m}<3\xi,
\end{align*}
where $m\in\mathbb{N}$ satisfies $\sum_{k=1}^{m-1}I_k<n\leq\sum_{k=1}^m I_k$.
\end{proof}

Given a Banach space $X$, its closed unit ball will be denoted by $\ball{X}$, and the evaluation of a functional $\dual{x}\in\dual{X}$ at $x\in X$ by $\duality{x,\dual{x}}$. We will consider the following types of extremal elements of $\ball{X}$: a point $x\in\ball{X}$ is
\begin{enumerate}[label={\upshape{(\roman*)}}]
\item an \emph{extreme point} of $\ball{X}$ if there are no $y,z\in\ball{X}\setminus\set{x}$ such that $x=\frac{1}{2}(y+z)$.
\item an \emph{exposed point} of $\ball{X}$ if there exists $\dual{x}\in\dual{X}$ such that $\norm{\dual{x}}=1$ and $x$ is the only element of $\ball{X}$ such that $\duality{x,\dual{x}}=1$.
\item a \emph{preserved extreme point} (also called \emph{\weaks-extreme point}) of $\ball{X}$ if it is an extreme point of $\ball{\ddual{X}}$.
\item a \emph{denting point} of $\ball{X}$ if there are slices of $\ball{X}$ (that is, sets of the form $\set{y\in\ball{X}:\duality{y,\dual{x}}>\alpha}$ for some $\dual{x}\in\dual{X}$ and $\alpha\in\mathbb{R}$) of arbitrarily small diameter containing $x$.
\item a \emph{strongly exposed point} of $\ball{X}$ if there exists $\dual{x}\in\dual{X}$ such that $\duality{x,\dual{x}}=1$ and that $\diam\set{y\in\ball{X}:\duality{y,\dual{x}}>1-\delta}\rightarrow 0$ when $\delta\rightarrow 0$.
\end{enumerate}
All of these elements have norm 1, and the implications (v)$\Rightarrow$(iv)$\Rightarrow$(iii)$\Rightarrow$(i) and (v)$\Rightarrow$(ii)$\Rightarrow$(i) hold. For further reference see e.g. \cite{GuMoZi_2014}. We will denote the set of extreme points of $\ball{X}$ as $\ext\ball{X}$.

Note also that the above concepts are invariant with respect to linear isometries and that a change of the base point in $M$ induces a linear isometry between the corresponding Lipschitz and Lipschitz-free spaces which preserves the elementary molecules. Therefore we may (and will) adapt the base point of $M$ without loss of generality.

The following statement describes the geometric characterizations of preserved extreme, denting, and strongly exposed points of $\ball{\lipfree{M}}$ in terms of the geometry of $M$. It summarizes Theorem 4.1 in \cite{AlGu_2019}, Theorem 2.4 in \cite{GPPR_2018} and Theorem 5.4 in \cite{GaPrRu_2018}:

\begin{theorem}
\label{tm:prext_strexp}
Let $M$ be a complete pointed metric space and $p,q$ be distinct points of $M$. Then:
\begin{enumerate}[label={\upshape{(\alph*)}}]
\item \label{tm:prext_strexp:prext} $\mol{pq}$ is a preserved extreme point of $\ball{\lipfree{M}}$ if and only if it is a denting point of $\ball{\lipfree{M}}$ if and only if $\slack{\xi;p,q}>0$ for all $\xi\in\beta M\setminus\set{p,q}$.
\item \label{tm:prext_strexp:strexp} $\mol{pq}$ is a strongly exposed point of $\ball{\lipfree{M}}$ if and only if there is a $C>0$ such that
\begin{equation}
\label{eq:strexp}
\min\set{d(x,p),d(x,q)}\leq C\cdot\slack{x;p,q}
\end{equation}
for all $x\in M$.
\end{enumerate}
\end{theorem}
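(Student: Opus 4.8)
Since the statement collects three results from the literature, the plan is to set up the common mechanism and indicate the idea behind each non-formal implication, postponing the routine transportation estimates and $\ell_1$-bookkeeping. Fix distinct $p,q\in M$, put $D=d(p,q)$, and (moving the base point to $p$, which is harmless) consider $f_0(x)=\frac{1}{2}(d(x,q)-d(x,p))-\frac{1}{2}D\in\Lip_0(M)$, which satisfies $\lipnorm{f_0}\le 1$ and $\duality{\mol{pq},f_0}=1$. We also use the identity $\embd(p)-\embd(q)=d(p,x)\mol{px}+d(x,q)\mol{xq}$, valid for $x\neq p,q$, rewritten as
\[
\mol{pq}=\frac{d(p,x)}{D}\,\mol{px}+\frac{d(x,q)}{D}\,\mol{xq},
\qquad\text{with}\qquad
\frac{d(p,x)+d(x,q)}{D}=1+\frac{\slack{x;p,q}}{D};
\]
thus a point $x$ with small $\slack{x;p,q}$ yields a near-decomposition of $\mol{pq}$ into two molecules. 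By the free implications $\mathrm{(v)}\Rightarrow\mathrm{(iv)}\Rightarrow\mathrm{(iii)}$ recalled in the Preliminaries (strongly exposed $\Rightarrow$ denting $\Rightarrow$ preserved extreme), it is enough to prove, for (a), the cycle ``preserved extreme $\Rightarrow$ the $\beta M$ condition $\Rightarrow$ denting'' (the last arrow then closing to ``preserved extreme'' for free), and for (b) the implications ``strongly exposed $\Rightarrow$ \eqref{eq:strexp}'' and ``\eqref{eq:strexp} $\Rightarrow$ strongly exposed''.

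The two ``only if'' implications are soft. Suppose the $\beta M$ condition fails and pick $\xi\in\beta M\setminus\set{p,q}$ with $\slack{\xi;p,q}=0$. If $\xi\in M$, the displayed identity writes $\mol{pq}=\frac{d(p,\xi)}{D}\mol{p\xi}+\frac{d(\xi,q)}{D}\mol{\xi q}$ as a proper convex combination of two distinct norm-one molecules (distinct because $g_0:=\min\set{d(\cdot,p),d(\cdot,q)}\in\Lip_0(M)$ separates them), so $\mol{pq}\notin\ext\ball{\lipfree{M}}$. If $\xi\in\beta M\setminus M$, then $d(p,\xi)+d(q,\xi)=D$ forces $d(p,\xi),d(q,\xi)\in(0,D)$; for a net $x_i\to\xi$ in $\beta M$ we have $\slack{x_i;p,q}\to 0$, so extracting subnets with $\mol{px_i}\wsconv\phi$ and $\mol{x_iq}\wsconv\psi$ in $\ball{\ddual{\lipfree{M}}}=\ball{\dual{\Lip_0(M)}}$, the displayed identity passes to the limit as $\mol{pq}=\lambda\phi+(1-\lambda)\psi$ with $\lambda=d(p,\xi)/D\in(0,1)$; since the continuous extension of $g_0$ to $\beta M$ is positive at $\xi$, we get $\phi(g_0)<0<\psi(g_0)$, hence $\phi\neq\psi$ and $\mol{pq}$ is not a preserved extreme point. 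Likewise, if \eqref{eq:strexp} fails, choose $x_n$ with $\min\set{d(x_n,p),d(x_n,q)}/\slack{x_n;p,q}\to\infty$, say $d(p,x_n)\le d(x_n,q)$ along a subsequence; if $h$ strongly exposes $\mol{pq}$, the displayed identity together with $\duality{\mol{px_n},h}\le 1$ forces $\duality{\mol{px_n},h}\to 1$, so $\mol{px_n}$ lies in slices of $h$ of vanishing diameter that also contain $\mol{pq}$ --- impossible, since a Kantorovich--Rubinstein computation gives $\norm{\mol{px_n}-\mol{pq}}=2-\frac{2d(p,x_n)}{D}+\frac{\slack{x_n;p,q}}{D}\ge 1$.

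The content is in the two ``if'' implications, and the genuine obstacle is ``$\beta M$ condition $\Rightarrow$ denting'': here $f_0$ is of no help, because its slices shrink precisely when \eqref{eq:strexp} holds, which is strictly stronger. Following \cite{AlGu_2019}, the plan is first to upgrade the $\beta M$ condition --- by applying compactness of $\beta M$ to the continuous extension of $\slack{\cdot;p,q}$ --- to the statement that for every $\epsilon>0$ there is $\delta>0$ with $\slack{x;p,q}<\delta\Rightarrow\min\set{d(x,p),d(x,q)}<\epsilon$; and then, for each $\epsilon>0$, to construct $g\in\ball{\Lip_0(M)}$, a perturbation of $f_0$ modified on the finitely many ``short'' molecules near $p$ and near $q$ that would otherwise spoil the estimate, whose slice $S=\set{\nu\in\ball{\lipfree{M}}:\duality{\nu,g}>1-\delta}$ contains $\mol{pq}$ and has $\diam S=O(\epsilon)$. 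For the latter one writes $\nu\in S$ as $\nu=\sum_n a_n\mol{p_nq_n}$ with $\sum_n\abs{a_n}$ close to $\norm{\nu}$ (Lemma \ref{lm:l1_series_argument}), deduces from $\duality{\nu,g}>1-\delta$ that all but $O(\delta)$ of the total variation is carried by molecules $\mol{p_nq_n}$ with $\set{p_n,q_n}$ $\epsilon$-close to $\set{p,q}$, and estimates that such molecules are $O(\epsilon)$-close to $\pm\mol{pq}$. The remaining implication ``\eqref{eq:strexp} $\Rightarrow$ strongly exposed'' runs on the same template, but more smoothly: \eqref{eq:strexp} forces $f_0$ to have Lipschitz constant bounded away from $1$ outside small balls around $p$ and $q$, so the $\ell_1$-decomposition argument applies directly to the slices of $f_0$ and shows they have diameter $O(\delta)$. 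The one delicate point throughout is the adaptive construction of the slicing functional in the denting case; the rest is metric estimates and $\ell_1$-bookkeeping.
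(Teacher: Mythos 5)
First, a point of reference: the paper does not prove this statement at all --- Theorem \ref{tm:prext_strexp} is explicitly a summary of Theorem 4.1 of \cite{AlGu_2019}, Theorem 2.4 of \cite{GPPR_2018} and Theorem 5.4 of \cite{GaPrRu_2018} --- so there is nothing internal to compare against. Your two ``only if'' implications are correct: the decomposition $\mol{pq}=\frac{d(p,x)}{D}\mol{px}+\frac{d(x,q)}{D}\mol{xq}$, its weak* limit along a net converging to $\xi\in\beta M\setminus M$, and the separation by $\min\{d(\cdot,p),d(\cdot,q)\}$ all work, as does the computation $\norm{\mol{px}-\mol{pq}}=1+\frac{d(x,q)-d(p,x)}{D}\geq 1$ in the relevant regime. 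The $\ell_1$-bookkeeping reduction (``near-norming molecules must be close to $\pm\mol{pq}$, hence slices are small'') is also the right template for the converse directions.

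The converses themselves, however, are not established, and the stated strategy for (b) is false. You claim that \eqref{eq:strexp} forces the slices of $f_0(t)=\frac12\pare{d(t,q)-d(t,p)}-\frac12 D$ to shrink. Take $M=\{p,q,x,y\}$ with $d(p,q)=d(x,y)=1$, $d(p,x)=d(q,y)=10$, $d(p,y)=d(q,x)=11$ (a valid metric). Then $\slack{x;p,q}=\slack{y;p,q}=20$ while $\min\{d(x,p),d(x,q)\}=10$, so \eqref{eq:strexp} holds with $C=\frac12$; yet $f_0(x)=0=f_0(p)$ and $f_0(y)=-1=f_0(q)$, so $\duality{\mol{xy},f_0}=1$ and every slice of $f_0$ contains $\mol{xy}$, which is at distance $\geq 1$ from $\mol{pq}$ (witness $h$ with $h(x)=1$, $h(y)=h(p)=h(q)=0$). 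So $f_0$ need not strongly expose even under \eqref{eq:strexp}; the functional used in \cite{GaPrRu_2018} is instead the normalized map $t\mapsto\frac{D}{2}\cdot\frac{d(t,q)-d(t,p)}{d(t,p)+d(t,q)}$, whose denominator penalizes precisely the pairs that defeat $f_0$. The same example shows that the obstruction in (a) is not confined to ``short molecules near $p$ and near $q$'': since
\begin{equation*}
2d(x,y)\pare{1-\duality{\mol{xy},f_0}}=\slack{y;x,q}+\slack{x;y,p},
\end{equation*}
$f_0$ is also nearly normed by pairs $(x,y)$ lying far from $\{p,q\}$ but nearly ``parallel'' to the segment, and the $\beta M$ hypothesis gives no quantitative control over these. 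The ``adaptive construction of the slicing functional'', which you yourself flag as the delicate point and describe only in one sentence, is the entire content of \cite{GPPR_2018}, Theorem 2.4. As written, the proposal proves only the two soft halves of the theorem.
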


There are several settings where one may apply the above criteria to show that $\mol{pq}$ must be a preserved extreme point or even a strongly exposed point whenever \mbox{$[p,q]=\set{p,q}$,} which are easy consequences or variations of known results but are not stated explicitly anywhere to the best of our knowledge. We collect them in \mbox{Proposition \ref{pr:state_of_the_art}.} Recall that a metric space $M$ has the \emph{Heine-Borel property} (we also say that $M$ is \emph{proper}) if the closed balls in $M$ are compact, and that $M$ is \emph{ultrametric} if \mbox{$d(x,y)\leq\max\set{d(x,z),d(y,z)}$} for any $x,y,z\in M$. Recall also the following definition from \cite{GPPR_2018}: a predual of $\lipfree{M}$ is \emph{natural} if it induces a \weaks-topology such that $\embd(M)\cap n\ball{\lipfree{M}}$ is \weaks-closed for any $n\in\mathbb{N}$.

\begin{proposition}
\label{pr:state_of_the_art}
Let $M$ be a complete pointed metric space, and let $p,q$ be distinct points of $M$ such that $[p,q]=\set{p,q}$. Then $\mol{pq}$ is a preserved extreme point of $\ball{\lipfree{M}}$ in these cases:

\begin{enumerate}[label={\upshape{(\alph*)}}]
\item \label{case:e2_proper} $M$ has the Heine-Borel property,
\item \label{case:e2_nat_predual} $\lipfree{M}$ has a natural predual,
\end{enumerate}

\noindent and it is a strongly exposed point of $\ball{\lipfree{M}}$ in the following cases:

\begin{enumerate}[label={\upshape{(\alph*)}}]
\addtocounter{enumi}{2}
\item \label{case:e2_ultrametric} $M$ is ultrametric,
\item \label{case:e2_tree} $\lipfree{M}$ is linearly isometric to $\ell_1(\Gamma)$ for some $\Gamma$.
\end{enumerate}
\end{proposition}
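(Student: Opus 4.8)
The plan is to verify, in each of the four cases, one of the geometric criteria in Theorem~\ref{tm:prext_strexp}. Since $[p,q]=\set{p,q}$, we automatically have $\slack{x;p,q}>0$ for every $x\in M\setminus\set{p,q}$; hence in cases \ref{case:e2_proper} and \ref{case:e2_nat_predual} it suffices to show that $\slack{\xi;p,q}>0$ also for $\xi\in\beta M\setminus M$ and then invoke Theorem~\ref{tm:prext_strexp}\ref{tm:prext_strexp:prext}. For \ref{case:e2_proper} I would in fact show $\slack{\xi;p,q}=\infty$ for all $\xi\in\beta M\setminus M$: if $d(\xi,p)<\infty$, then a net in $M$ converging to $\xi$ in $\beta M$ is eventually contained in a closed ball around $p$, which is compact by the Heine--Borel property and therefore closed in $\beta M$, forcing $\xi\in M$, a contradiction. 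For \ref{case:e2_nat_predual}, let $X$ be a natural predual, so $\embd(M)\cap n\ball{\lipfree{M}}$ is \weaks-closed in $\lipfree{M}=\dual{X}$ for each $n$, and suppose towards a contradiction that $\slack{\xi;p,q}=0$ for some $\xi\in\beta M\setminus M$; then $d(\xi,p)$ and $d(\xi,q)$ are finite (and positive, since $\inf_{x\in M}d(\xi,x)>0$) and sum to $d(p,q)$. Taking a net $(x_i)$ in $M$ with $x_i\to\xi$ in $\beta M$, it eventually lies in some ball $\cl{B_M(0,n)}$; truncating the elements of $X\subseteq\Lip_0(M)$ shows that $f(x_i)$ converges for each $f\in X$, so $\embd(x_i)\wsconv\embd(y)$ for some $y\in M$ by naturality. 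Weak* lower semicontinuity of the norm gives $d(y,p)\le d(\xi,p)$ and $d(y,q)\le d(\xi,q)$, hence $y\in[p,q]=\set{p,q}$; but then $d(\xi,p)=0$ or $d(\xi,q)=0$, contradicting $\inf_{x\in M}d(\xi,x)>0$. (This is also close to what is done in \cite{GPPR_2018}.)

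Case \ref{case:e2_ultrametric} follows from Theorem~\ref{tm:prext_strexp}\ref{tm:prext_strexp:strexp} with $C=1$: in an ultrametric space the two largest of $d(x,p)$, $d(x,q)$, $d(p,q)$ coincide, so if $d(x,p)=d(x,q)$ then $d(p,q)\le d(x,p)$ and $\slack{x;p,q}=2d(x,p)-d(p,q)\ge d(x,p)$, while if (say) $d(x,p)<d(x,q)$ then $d(p,q)=d(x,q)$ and $\slack{x;p,q}=d(x,p)$; in either case $\min\set{d(x,p),d(x,q)}\le\slack{x;p,q}$, so \eqref{eq:strexp} holds.

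For case \ref{case:e2_tree}, fix a linear isometry $T\colon\lipfree{M}\to\ell_1(\Gamma)$. Each $\pm e_\gamma$ is a strongly exposed, hence preserved extreme, point of $\ball{\ell_1(\Gamma)}$, so each $T^{-1}(e_\gamma)$ is a preserved extreme point of $\ball{\lipfree{M}}$ and therefore, by \cite{Weaver_1995}, an elementary molecule $\mol{p_\gamma q_\gamma}$. Since $T^{-1}$ is injective and no standard basis vector of $\ell_1(\Gamma)$ is the negative of another, these molecules are pairwise distinct and none is the negative of another; in particular, at most one index $\gamma$ satisfies $\set{p_\gamma,q_\gamma}=\set{p,q}$. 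Writing $T(\mol{pq})=\sum_\gamma c_\gamma e_\gamma$ with $\sum_\gamma\abs{c_\gamma}=1$ (so $\mol{pq}=\sum_\gamma c_\gamma\mol{p_\gamma q_\gamma}$), for any $f\in\Lip_0(M)$ with $\lipnorm{f}=1$ and $\duality{\mol{pq},f}=1$ we get $\sum_\gamma c_\gamma\duality{\mol{p_\gamma q_\gamma},f}=1=\sum_\gamma\abs{c_\gamma}$ with $\abs{\duality{\mol{p_\gamma q_\gamma},f}}\le 1$, which forces $\duality{\mol{p_\gamma q_\gamma},f}=\pm 1$ for every $\gamma$ with $c_\gamma\neq 0$; that is, every functional norming $\mol{pq}$ must also norm $\mol{p_\gamma q_\gamma}$ for each such $\gamma$.

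The main obstacle is to show that this is impossible unless $\set{p_\gamma,q_\gamma}=\set{p,q}$. Given a pair $\set{r,s}\neq\set{p,q}$ --- so that, say, $r\notin\set{p,q}$ --- the plan is to exhibit a $1$-Lipschitz function on the finite set $\set{p,q,r,s}$ which norms $\mol{pq}$ but has $\abs{f(r)-f(s)}<d(r,s)$, and to extend it to $M$ without increasing its Lipschitz constant by McShane's theorem, contradicting the paragraph above. The room to carry this out comes from the fact that once $f(q)=0$ and $f(p)=d(p,q)$ are fixed, $f(r)$ may be chosen anywhere in an interval of length $\slack{r;p,q}>0$ (and likewise $f(s)$ if $s\notin\set{p,q}$); making this precise requires a short case analysis according to whether $s\in\set{p,q}$ and, if not, whether $r\in[s,q]$, $s\in[r,q]$, or neither. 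Granting this, exactly one coefficient $c_\gamma$ is nonzero, so $T(\mol{pq})=c_\gamma e_\gamma$ with $\abs{c_\gamma}=1$, hence $T(\mol{pq})=\pm e_\gamma$ is a strongly exposed point of $\ball{\ell_1(\Gamma)}$; since strong exposedness is a linear-isometric invariant, $\mol{pq}$ is strongly exposed. (One could shorten case \ref{case:e2_tree} by invoking Theorem~\ref{tm:main_theorem} to see that $\mol{pq}$ is extreme, whence $T(\mol{pq})$ is one of the $\pm e_\gamma$ directly; the argument above is meant to be independent of it.)
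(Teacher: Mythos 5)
Your arguments for cases \ref{case:e2_proper}--\ref{case:e2_ultrametric} are correct and essentially the paper's: \ref{case:e2_proper} and \ref{case:e2_nat_predual} reduce via Theorem \ref{tm:prext_strexp}\ref{tm:prext_strexp:prext} to showing $\slack{\xi;p,q}>0$ for $\xi\in\beta M\setminus M$ (the paper phrases \ref{case:e2_proper} contrapositively through a cluster point of a bounded net, you show directly that $d(\xi,p)=\infty$ off $M$ when $M$ is proper; for \ref{case:e2_nat_predual} the paper passes to a \weaks-convergent subnet rather than arguing convergence of the whole net, but the lower-semicontinuity endgame is identical), and \ref{case:e2_ultrametric} is the same isosceles-triangle computation giving $C=1$ in \eqref{eq:strexp}. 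Case \ref{case:e2_tree} is where you genuinely diverge: the paper invokes the structure theorem of \cite{DaKaPr_2016} to realize $M$ as a negligible subset of an $\mathbb{R}$-tree containing all branching points and then verifies \eqref{eq:strexp} with $C=1/2$ by a three-case computation on the tree, whereas you work intrinsically with the isometry $T\colon\lipfree{M}\to\ell_1(\Gamma)$, identify each $T^{-1}e_\gamma$ as a molecule via \cite{Weaver_1995}, and show $T\mol{pq}=\pm e_\gamma$. Your route avoids the tree machinery entirely (at the cost of not producing an explicit constant in \eqref{eq:strexp}), but it hinges on a lemma you only sketch: for $\set{r,s}\neq\set{p,q}$ there is a norm-one $f$ with $\Phi f(p,q)=1$ and $\abs{\Phi f(r,s)}<1$. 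That lemma is true and closes more uniformly than by the case split you propose. With $f(q)=0$ and $f(p)=d(p,q)$, a function on $\set{p,q,r,s}$ is $1$-Lipschitz exactly when $f(x)\in I_x:=[d(p,q)-d(x,p),\,d(x,q)]$ for $x=r,s$ (an interval of length $\slack{x;p,q}$) and $\abs{f(r)-f(s)}\leq d(r,s)$; the feasible set is convex and nonempty by McShane, so if it met only the two lines $\abs{a-b}=d(r,s)$ then the whole rectangle $I_r\times I_s$ would lie in one closed half-plane $\set{a-b\geq d(r,s)}$ (say), forcing $d(p,q)\geq d(p,r)+d(r,s)+d(s,q)$, hence equality and $r,s\in[p,q]=\set{p,q}$, contradicting $\set{r,s}\neq\set{p,q}$. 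Once that is written out, your proof of \ref{case:e2_tree} is complete and correct.
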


\begin{proof}
\ref{case:e2_proper} This is an easy extension of the compact case that was proved in Theorem 4.2 in \cite{AlGu_2019}. Indeed, suppose that $\mol{pq}$ is not a preserved extreme point of $\ball{\lipfree{M}}$, then by Theorem \ref{tm:prext_strexp}\ref{tm:prext_strexp:prext} there is $\xi\in\beta M\setminus\set{p,q}$ such that $d(p,\xi)+d(q,\xi)=d(p,q)$. Let $(x_i)$ be a net in $M$ that converges to $\xi$. Since $d(p,x_i)+d(q,x_i)$ converges to $d(p,q)$, we may assume that $(x_i)$ is bounded. The Heine-Borel property then implies that $(x_i)$ has a cluster point $x\in M$ that is different from $p,q$ and clearly $d(p,x)+d(q,x)=d(p,q)$.

\ref{case:e2_nat_predual} As in case \ref{case:e2_proper}, if $\mol{pq}$ is not preserved extreme then there is a bounded net $(x_i)$ in $M$ that converges to $\xi\in\beta M\setminus\set{p,q}$ and such that $d(p,x_i)+d(q,x_i)$ converges to $d(p,q)$. Then $(\embd(x_i))$ is a bounded net in $\lipfree{M}$ and so we may replace it by a \weaks-convergent subnet. Since $\lipfree{M}$ has a natural predual, there is $x\in M$ such that $\embd(x_i)\wsconv\embd(x)$. By \weaks-lower semicontinuity of the norm of $\lipfree{M}$ we get
\begin{align*}
d(p,x) = \norm{\embd(p)-\embd(x)} &\leq \liminf_i\norm{\embd(p)-\embd(x_i)} \\
&= \liminf_i d(p,x_i)=d(p,\xi)
\end{align*}
and similarly $d(q,x)\leq d(q,\xi)$. But then
$$
d(p,q)\leq d(p,x)+d(q,x)\leq d(p,\xi)+d(q,\xi)=d(p,q),
$$
so all inequalities are in fact equalities. In particular, $d(p,x)=d(p,\xi)>0$ and  $d(q,x)=d(q,\xi)>0$, thus $x\neq p,q$ and $d(p,x)+d(q,x)=d(p,q)$.

\ref{case:e2_ultrametric} Let $x\in M\setminus\set{p,q}$ and recall the following general property of ultrametric spaces: if $d(x,p)\neq d(p,q)$, then $d(x,q)=\max\set{d(x,p),d(p,q)}$ (see Property 3.3 in \cite{Dalet_2015_2}). Now distinguish three cases:
\begin{itemize}
\item If $d(x,p)=d(p,q)$, then $\slack{x;p,q}=d(x,q)$.
\item If $d(x,p)<d(p,q)$, then $d(x,q)=d(p,q)$ and $\slack{x;p,q}=d(x,p)$.
\item If $d(x,p)>d(p,q)$, then $d(x,q)=d(x,p)$ and $\slack{x;p,q}=2d(x,p)-d(p,q)$, hence $\slack{x;p,q}>d(x,p)$.
\end{itemize}
In all cases \eqref{eq:strexp} is satisfied with $C=1$, so $\mol{pq}$ is a strongly exposed point of $\ball{\lipfree{M}}$ by Theorem \ref{tm:prext_strexp}\ref{tm:prext_strexp:strexp}.

We remark that in an ultrametric space, the condition $[p,q]=\set{p,q}$ is true for any pair of points $p,q$, so all elementary molecules are strongly exposed.

\ref{case:e2_tree} By Theorem 5 in \cite{DaKaPr_2016}, $M$ is a negligible subset of an $\mathbb{R}$-tree $T$ containing all branching points of $T$. Let $0$ denote the root of $T$ and assign it as the base point of $M$. Write $p\prec q$ if $q$ is a successor of $p$, i.e. if $p\in[0,q]$. Let $p,q\in M$ be such that $[p,q]\cap M=\set{p,q}$. If neither $p\prec q$ nor $q\prec p$, then there is a branching point $r=p\wedge q$, defined by $[0,r]=[0,p]\cap [0,q]$, such that $r\prec p,q$ and $d(p,q)=d(p,r)+d(r,q)$, hence $r\in[p,q]\cap M$, a contradiction. So assume that $p\prec q$ and let $x\in M\setminus\set{p,q}$. Distinguish three cases:
\begin{itemize}
\item If $q\prec x$, then $q\in[p,x]$ and so $\slack{x;p,q}=2d(q,x)$.
\item If $x\prec p$, then $p\in[x,q]$ and so $\slack{x;p,q}=2d(p,x)$.
\item Otherwise, let $r=p\wedge x$, then $r\in [x,p]\cap M$ because it is a branching point, and also $r\in [x,q]$, hence $\slack{x;p,q}=2d(p,x)$.
\end{itemize}
So \eqref{eq:strexp} is satisfied with $C=1/2$ and the conclusion follows by Theorem \ref{tm:prext_strexp}\ref{tm:prext_strexp:strexp}.
\end{proof}

\section{Intersections of free spaces}
\label{section: intersections}

Recall that $\Lip_0(M)$ is an algebra under pointwise multiplication if (and only if) $M$ is bounded. Indeed, for any $f,g\in\Lip_0(M)$ we have
\begin{align}
\label{eq:mult_norm_bound}
\lipnorm{fg} &\leq \lipnorm{f}\norm{g}_\infty + \lipnorm{g}\norm{f}_\infty \\
\notag &\leq \lipnorm{f}\lipnorm{g}\diam(M) + \lipnorm{g}\lipnorm{f}\diam(M) \\
\notag &= 2\diam(M)\lipnorm{f}\lipnorm{g}
\end{align}
and so $fg\in\Lip_0(M)$. $\Lip_0(M)$ is not in general a Banach algebra, as that would require $\lipnorm{fg}\leq\lipnorm{f}\lipnorm{g}$, but it can be equivalently renormed to become one by dilating $M$ by a constant factor so that its diameter is less than $1/2$.

An ideal in $\Lip_0(M)$ is a subspace $Y$ (not necessarily closed) such that $fg\in Y$ for any $f\in Y$ and $g\in\Lip_0(M)$. Following Chapter 4 in \cite{Weaver}, for any set $K\subset M$ that contains the base point let us define
$$
\ideal{K}=\set{f\in\Lip_0(M):f(x)=0\text{ for all }x\in K},
$$
which is a \weaks-closed ideal of $\Lip_0(M)$. Note that we have $\lipfree{K}^\perp=\ideal{K}$ and $\ideal{K}_\perp=\lipfree{K}$ (see e.g. \cite{HaNo_2017}, where $\ideal{K}$ is denoted $\Lip_K(M)$). For any subspace $Y$ of $\Lip_0(M)$, let us also define the \emph{hull} of $Y$ as the closed set
$$
\hull{Y}=\set{x\in M:f(x)=0\text{ for all }f\in Y} \,.
$$
Notice that $\hull{\ideal{K}}=K$ for any closed $K\subset M$, as witnessed by the Lipschitz map $x\mapsto d(x,K):=\inf\set{d(x,y):y\in K}$.

We show next that an element of $\lipfree{M}$ endowed with a weight is again an element of $\lipfree{M}$.

\begin{lemma}
\label{lm:multiply_element}
Let $M$ be a bounded pointed metric space, $\mu\in\lipfree{M}$ and let \mbox{$g\in\Lip_0(M)$.} Define the function ${\mu\circ g}\colon\Lip_0(M)\rightarrow\mathbb{R}$ by
$$({\mu\circ g})(f)=\duality{\mu,fg}$$
for all $f\in\Lip_0(M)$. Then $\mu\circ g\in\lipfree{M}$.
\end{lemma}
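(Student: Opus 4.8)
The plan is to realize $\mu\circ g$ as the image of $\mu$ under the adjoint of the multiplication operator $f\mapsto fg$ on $\Lip_0(M)$, and then exploit density of $\lspan\,\embd(M)$ in $\lipfree{M}$. First I would observe that, since $M$ is bounded, \eqref{eq:mult_norm_bound} shows that the linear map $m_g\colon\Lip_0(M)\to\Lip_0(M)$ given by $m_g(f)=fg$ is bounded, with $\lipnorm{m_g(f)}\leq 2\diam(M)\lipnorm{g}\lipnorm{f}$. Consequently its adjoint $\dual{m_g}\colon\dual{\Lip_0(M)}\to\dual{\Lip_0(M)}$ is a bounded linear operator, and by definition $\dual{m_g}(\nu)=\nu\circ g$ for every $\nu\in\dual{\Lip_0(M)}$; in particular $\mu\circ g$ is a well-defined element of $\dual{\Lip_0(M)}$ with $\norm{\mu\circ g}\leq 2\diam(M)\lipnorm{g}\norm{\mu}$. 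The only thing left to prove is that this element actually belongs to the (in general proper) subspace $\lipfree{M}$ of $\dual{\Lip_0(M)}$.

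Next I would check the assertion on the generators. For a point $p\in M$ and $f\in\Lip_0(M)$ we have $(\embd(p)\circ g)(f)=\duality{\embd(p),fg}=f(p)g(p)=\duality{g(p)\embd(p),f}$, so $\embd(p)\circ g=g(p)\,\embd(p)\in\lspan\,\embd(M)$. By linearity of $\dual{m_g}$ it follows that $\dual{m_g}$ maps $\lspan\,\embd(M)$ into $\lspan\,\embd(M)\subseteq\lipfree{M}$.

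Finally I would pass to the closure: $\lipfree{M}=\cl{\lspan}\,\embd(M)$ is a closed subspace of $\dual{\Lip_0(M)}$, the operator $\dual{m_g}$ is continuous, and $\dual{m_g}(\lspan\,\embd(M))\subseteq\lipfree{M}$; hence $\dual{m_g}(\lipfree{M})\subseteq\lipfree{M}$, which is exactly the claim $\mu\circ g\in\lipfree{M}$. Alternatively, one could invoke Lemma \ref{lm:l1_series_argument} to write $\mu=\sum_n a_n\mol{p_nq_n}$ with $\sum_n\abs{a_n}<\infty$; then $\sum_n a_n(\mol{p_nq_n}\circ g)$ converges absolutely in $\lipfree{M}$ because each term lies in $\lipfree{M}$ and $\norm{\mol{p_nq_n}\circ g}\leq 2\diam(M)\lipnorm{g}$, and its sum equals $\mu\circ g$ by continuity of $\dual{m_g}$.

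As for the main difficulty: there is essentially no hard step here. The content of the lemma is the easy but necessary observation that $M$ being bounded is precisely what makes multiplication by $g$ a bounded operator on $\Lip_0(M)$, so that the adjoint makes sense; the one point requiring a moment's care is that $\mu\circ g$ a priori lives in all of $\dual{\Lip_0(M)}$, and it is the density/continuity argument that pins it down inside the predual $\lipfree{M}$.
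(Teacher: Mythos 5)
Your proof is correct, and it takes a genuinely different route from the paper's. You view $f\mapsto fg$ as a bounded multiplication operator $m_g$ on $\Lip_0(M)$ (boundedness being exactly where \eqref{eq:mult_norm_bound} and the boundedness of $M$ enter), identify $\mu\circ g$ with $\dual{m_g}\mu$, compute $\embd(p)\circ g=g(p)\,\embd(p)$ on the generators, and conclude from the norm-continuity of $\dual{m_g}$ together with the norm-density of $\lspan\,\embd(M)$ in $\lipfree{M}=\cl{\lspan}\,\embd(M)$. The paper instead verifies that $\mu\circ g$ is \weaks-continuous: it checks that for a bounded net $(f_i)$ converging to $f$ in the \weaks-topology one has $f_ig\to fg$ in the \weaks-topology (using that \weaks-convergence agrees with pointwise convergence on bounded sets), and then invokes the Banach--Dieudonn\'e theorem to conclude that $\mu\circ g$ lies in the predual $\lipfree{M}$. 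Your argument is more elementary and self-contained: it needs only the definition of $\lipfree{M}$ as a closed span and the fact that adjoints of bounded operators are bounded, avoiding both Banach--Dieudonn\'e and the identification of $\lipfree{M}$ with the space of \weaks-continuous functionals; the paper's route is more in the spirit of the \weaks-topology machinery it deploys throughout Section \ref{section: intersections}. Both yield the same norm bound $\norm{\mu\circ g}\leq 2\diam(M)\lipnorm{g}\norm{\mu}$.
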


\begin{proof}
It is clear that $\mu\circ g$ is a linear functional, and it follows immediately from \eqref{eq:mult_norm_bound} that
$$
\norm{\mu\circ g}\leq 2\diam(M)\norm{\mu}\lipnorm{g}.
$$
Hence $\mu\circ g\in\dual{\Lip_0(M)}$. Now let $(f_i)$ be a bounded net \weaks-converging to an $f$ in $\Lip_0(M)$. Since \weaks-convergence agrees with pointwise convergence in bounded subsets of $\Lip_0(M)$, it is easy to verify that then also $(f_ig)$ \weaks-converges to $fg$. Therefore 
$$
\lim_i\duality{f_i,\mu\circ g}=\lim_i\duality{\mu,f_i g}=\duality{\mu,fg}=\duality{f,\mu\circ g}.
$$
So, by the Banach-Dieudonn\'e theorem $\mu\circ g$ is \weaks-continuous and it belongs to $\lipfree{M}$.
\end{proof}

Using these ``weighted elements'' of $\lipfree{M}$, we can show that the \weaks-closure of any ideal in $\Lip_0(M)$ is again an ideal. Specifically:

\begin{proposition}
\label{pr:ideal_wscl}
Let $M$ be a bounded and complete pointed metric space and let \mbox{$Y\subset\Lip_0(M)$} be an ideal. Then $\wscl{Y}=\ideal{\hull{Y}}$.
\end{proposition}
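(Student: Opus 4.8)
The plan is to prove the two inclusions separately. The inclusion $\wscl{Y}\subseteq\ideal{\hull{Y}}$ comes for free and uses nothing about $Y$ being an ideal: every $f\in Y$ vanishes on $\hull{Y}$ by the very definition of the hull, so $Y\subseteq\ideal{\hull{Y}}$; and since all elements of $Y$ vanish at the base point, $\hull{Y}$ is a closed subset of $M$ containing the base point, so $\ideal{\hull{Y}}$ is \weaks-closed, whence $\wscl{Y}\subseteq\ideal{\hull{Y}}$.

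For the opposite inclusion the first step is to check that $\wscl{Y}$ is again an \emph{ideal}, not merely a \weaks-closed subspace. This is exactly what Lemma \ref{lm:multiply_element} is for. Fix $g\in\Lip_0(M)$ and let $M_g\colon\Lip_0(M)\to\Lip_0(M)$ be the multiplication operator $M_g f=fg$, which is bounded by \eqref{eq:mult_norm_bound}. Lemma \ref{lm:multiply_element} says precisely that the adjoint $M_g^\ast$ carries $\lipfree{M}$ into itself, with $M_g^\ast \mu=\mu\circ g$; hence $M_g$ is the adjoint of the bounded operator $\mu\mapsto\mu\circ g$ on $\lipfree{M}$, and in particular it is continuous for the respective \weaks-topologies. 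Therefore, if $h\in\wscl{Y}$ and $(f_i)$ is a net in $Y$ with $f_i\wsconv h$, then $f_i g\in Y$ (as $Y$ is an ideal) and $f_i g\wsconv hg$, so $hg\in\wscl{Y}$. Thus $\wscl{Y}$ is a \weaks-closed ideal of $\Lip_0(M)$.

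The final step is to invoke the description of the \weaks-closed ideals of $\Lip_0(M)$ for bounded (and complete) $M$ available in Chapter 4 of \cite{Weaver}: they are precisely the spaces $\ideal{K}$ for $K\subseteq M$ closed and containing the base point; equivalently, every \weaks-closed ideal $Z$ satisfies $Z=\ideal{\hull{Z}}$. Applying this to $Z=\wscl{Y}$, and observing that $\hull{\wscl{Y}}\subseteq\hull{Y}$ (since $Y\subseteq\wscl{Y}$ and both $\hull{\cdot}$ and $\ideal{\cdot}$ reverse inclusions), we obtain $\ideal{\hull{Y}}\subseteq\ideal{\hull{\wscl{Y}}}=\wscl{Y}$. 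Together with the first paragraph this yields $\wscl{Y}=\ideal{\hull{Y}}$.

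I expect the genuine work to lie in the last step: isolating the precise statement in \cite{Weaver} and confirming it applies in our generality (bounded complete, possibly nonseparable $M$). If that statement is recorded there only for the equivalent Banach-algebra renorming of $\Lip_0(M)$ obtained by rescaling $M$ so that $\diam(M)<1/2$, one should note that this rescaling changes neither the \weaks-topology, nor which subspaces are ideals, nor the hull operation, so nothing is lost. By contrast, the verification that $\wscl{Y}$ is an ideal, once Lemma \ref{lm:multiply_element} is in hand, is routine.
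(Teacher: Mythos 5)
Your proof is correct and follows essentially the same route as the paper: Lemma \ref{lm:multiply_element} to show that $\wscl{Y}$ is again an ideal, followed by Weaver's Corollary 4.2.6 identifying \weaks-closed ideals with the spaces $\ideal{K}$. Your packaging of the first step as \weaks-to-\weaks\ continuity of the multiplication operator $M_g$ (being the adjoint of $\mu\mapsto\mu\circ g$) is just a cleaner phrasing of the explicit neighborhood computation carried out in the paper, and your closing observation $\hull{\wscl{Y}}\subseteq\hull{Y}$ combined with the easy inclusion replaces the paper's equally easy remark that $\hull{\wscl{Y}}=\hull{Y}$.
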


\begin{proof}
Let $f\in\wscl{Y}$, $g\in\Lip_0(M)$ and $h=fg$, and let $U\subset\Lip_0(M)$ be a \mbox{\weaks-neighborhood} of $h$. Then $U$ contains a \weaks-neighborhood $V$ of the form
$$
V=\set{\psi\in\Lip_0(M): \abs{\duality{\mu_n,h-\psi}}<\varepsilon \text{ for } n=1,\ldots,N},
$$
where $\mu_n\in\lipfree{M}$, $\varepsilon>0$, and $N\in\mathbb{N}$. Consider the set
$$
W=\set{\phi\in\Lip_0(M): \abs{\duality{\mu_n\circ g,f-\phi}}<\varepsilon \text{ for } n=1,\ldots,N},
$$
where $\mu_n\circ g$ are as in Lemma \ref{lm:multiply_element}. Then $W$ is a \weaks-neighborhood of $f$, so there exists $\phi\in Y\cap W$. Let $\psi=\phi g$. Then $\psi\in Y$ since $Y$ is an ideal, and for any $n=1,\ldots,N$ we have
$$
\abs{\duality{\mu_n,h-\psi}}=\abs{\duality{\mu_n,(f-\phi)g}}=\abs{\duality{\mu_n\circ g,f-\phi}}<\varepsilon,
$$
so $\psi\in V$. Therefore $V\cap Y$ is nonempty, and it follows that $h\in\wscl{Y}$. We have thus proved that $\wscl{Y}$ is an ideal. By Corollary 4.2.6 in \cite{Weaver} we get \mbox{$\wscl{Y}=\ideal{\hull{\wscl{Y}}}$,} where completeness of $M$ is used. Clearly $\hull{\wscl{Y}}=\hull{Y}$, which ends the proof.
\end{proof}

We can now state the main results in this section. In what follows, the Lipschitz-free spaces over subsets of $M$ are identified with the corresponding subspaces of $\lipfree{M}$, as we have remarked before Lemma \ref{lm:l1_series_argument}.

\begin{theorem}
\label{tm:f_intersection}
Let $M$ be a bounded and complete pointed metric space, and let $\set{K_i:i\in I}$ be a family of closed subsets of $M$ containing the base point. Then
$$
\bigcap_{i\in I}\lipfree{K_i}=\mathcal{F}\pare{\bigcap_{i\in I}K_i} \,.
$$
\end{theorem}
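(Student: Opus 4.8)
The plan is to translate the statement about intersections of Lipschitz-free subspaces into a statement about sums of their annihilator ideals, and then apply Proposition \ref{pr:ideal_wscl}. The key duality facts, recalled just before Lemma \ref{lm:multiply_element}, are that $\lipfree{K}^\perp = \ideal{K}$ and $\ideal{K}_\perp = \lipfree{K}$ for any closed set $K$ containing the base point. Using the standard relation between preannihilators of subspaces and their intersections, one has
\[
\bigcap_{i\in I}\lipfree{K_i} = \bigcap_{i\in I}\ideal{K_i}_\perp = \pare{\wscl{\lspan\bigcup_{i\in I}\ideal{K_i}}}_\perp,
\]
so everything reduces to identifying the \weaks-closure of the linear span $Y := \lspan\bigcup_{i\in I}\ideal{K_i}$.

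First I would check that $Y$ is an ideal in $\Lip_0(M)$: it is a subspace by construction, and since each $\ideal{K_i}$ is an ideal, a product $fg$ with $f\in Y$, $g\in\Lip_0(M)$ lies in $Y$ by distributing $g$ over a finite representation $f=\sum_j f_j$ with $f_j\in\ideal{K_{i_j}}$. Then Proposition \ref{pr:ideal_wscl} applies (using that $M$ is bounded and complete) and gives $\wscl{Y}=\ideal{\hull{Y}}$. Next I would compute the hull: $\hull{Y} = \set{x\in M : f(x)=0 \text{ for all } f\in Y}$, and since $Y$ contains every $\ideal{K_i}$, a point $x$ lies in $\hull{Y}$ iff it lies in $\hull{\ideal{K_i}}$ for every $i$; as each $K_i$ is closed, $\hull{\ideal{K_i}}=K_i$, so $\hull{Y}=\bigcap_{i\in I}K_i$. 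Writing $K:=\bigcap_{i\in I}K_i$, which is closed and contains the base point, we conclude $\wscl{Y}=\ideal{K}$, whence
\[
\bigcap_{i\in I}\lipfree{K_i} = \pare{\wscl{Y}}_\perp = \ideal{K}_\perp = \lipfree{K},
\]
which is exactly the claim.

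The step I expect to require the most care is the first duality identity, namely that $\bigcap_i S_i{}_\perp = \pare{\wscl{\lspan\bigcup_i S_i}}_\perp$ for a family of \weaks-closed subspaces $S_i$ of a dual space. One inclusion is immediate; for the other, the cleanest route is to take bipolars: $\bigcap_i S_i$ is \weaks-closed, so $\pare{\bigcap_i S_i}^{\perp}{}_\perp = \bigcap_i S_i$, and $\pare{\bigcap_i S_i}^\perp$ is the norm-closed span of $\bigcup_i S_i{}^\perp$ in the \emph{bidual}; one must then verify this coincides with the \weaks-closed span when working on the predual side, which is where boundedness of $M$ (ensuring $\Lip_0(M)$ is genuinely the dual of $\lipfree{M}$ with the expected \weaks-topology) and the Hahn–Banach separation theorem enter. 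Everything else — the ideal check, the hull computation, and the final unwinding — is routine bookkeeping with the definitions of $\ideal{\cdot}$ and $\hull{\cdot}$.
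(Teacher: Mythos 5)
Your proof is correct and follows essentially the same route as the paper's: reduce to preannihilators, observe that $\lspan\bigcup_{i\in I}\ideal{K_i}$ is an ideal, apply Proposition \ref{pr:ideal_wscl}, and identify the hull with $\bigcap_{i\in I}K_i$. The one step you flag as delicate is in fact elementary: for $\mu$ in the predual, $f\mapsto\duality{\mu,f}$ is linear and \weaks-continuous on $\Lip_0(M)$, so the preannihilator of any set automatically coincides with the preannihilator of the \weaks-closed linear span of that set --- no bipolar or Hahn--Banach separation argument is needed.
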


\begin{proof}
We have
\begin{align*}
\bigcap_{i\in I}\lipfree{K_i} = \bigcap_{i\in I}\pare{\ideal{K_i}_\perp} &= \pare{\bigcup_{i\in I}\ideal{K_i}}_\perp \\
&= \pare{\lspan\set{\ideal{K_i}:i\in I}}_\perp \\
&= \pare{\wscl{\lspan}\set{\ideal{K_i}:i\in I}}_\perp \,.
\end{align*}
Since $\lspan\set{\ideal{K_i}:i\in I}$ is an ideal in $\Lip_0(M)$, we may apply Proposition \ref{pr:ideal_wscl} to get
$$
\bigcap_{i\in I}\lipfree{K_i} = \ideal{H}_\perp = \lipfree{H},
$$
where $H=\hull{\lspan\set{\ideal{K_i}:i\in I}}$. Now notice that $\bigcap_{i\in I}K_i\subset H$, and for each $x\notin\bigcap_{i\in I}K_i$ there exists $i\in I$ such that $x\notin K_i$, so the function $y\mapsto d(y,K_i)$ shows that $x\notin H$. Thus $H=\bigcap_{i\in I}K_i$ and this finishes the proof.
\end{proof}

Let us introduce now the notion of a support of an element of a free space pertinent to our context. 

\begin{definition} Consider a pointed metric space $M$ with the base point $0$. For a $\mu\in\lipfree{M}$ let the \emph{support} of $\mu$, denoted $\supp(\mu)$, be defined as the smallest closed set $K\subset M$ such that $\mu\in\lipfree{K\cup\set{0}}$, provided such a $K$ exists. That is, for any closed $L\subset M$ that contains the base point, $\mu\in\lipfree{L}$ if and only if $K\subset L$. 
\end{definition}

The conclusion of Theorem \ref{tm:f_intersection} can be equivalently restated in terms of supports in the following sense.

\begin{proposition}
\label{prop: intersections_iff_supports}
Let $M$ be a pointed metric space. The following are equivalent:
\begin{enumerate}[label={\upshape{(\roman*)}}]
\item If $\set{K_i:i\in I}$ is a family of closed subsets of $M$ that contain the base point, then $\bigcap_{i\in I}\lipfree{K_i}=\mathcal{F}\pare{\bigcap_{i\in I}K_i}$.
\item The support of $\mu$ exists for every $\mu\in\lipfree{M}$.
\end{enumerate}
\end{proposition}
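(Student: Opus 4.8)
The plan is to derive each implication directly from the definitions, using only the elementary monotonicity fact that $\lipfree{N}\subseteq\lipfree{N'}$, as subspaces of $\lipfree{M}$, whenever $N\subseteq N'$ are closed subsets of $M$ containing the base point, together with the defining ``if and only if'' property of the support.

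For the implication (i)$\Rightarrow$(ii), I would fix $\mu\in\lipfree{M}$ and consider the family $\mathcal{K}$ of all closed subsets $L\subseteq M$ with $0\in L$ and $\mu\in\lipfree{L}$; this family is nonempty since $M\in\mathcal{K}$. Put $K=\bigcap_{L\in\mathcal{K}}L$, which is a closed set containing the base point. Applying hypothesis (i) to the family $\mathcal{K}$ gives $\mu\in\bigcap_{L\in\mathcal{K}}\lipfree{L}=\lipfree{K}$, so $K$ is a valid candidate for the support. To check minimality, let $L'\subseteq M$ be closed with $0\in L'$: if $\mu\in\lipfree{L'}$ then $L'\in\mathcal{K}$ and hence $K\subseteq L'$, while conversely $K\subseteq L'$ forces $\mu\in\lipfree{K}\subseteq\lipfree{L'}$ by monotonicity. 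Thus $K=\supp(\mu)$.

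For the implication (ii)$\Rightarrow$(i), I would take a family $\set{K_i:i\in I}$ of closed subsets of $M$ containing the base point and set $K=\bigcap_{i\in I}K_i$. The inclusion $\lipfree{K}\subseteq\bigcap_{i\in I}\lipfree{K_i}$ is immediate from monotonicity, since $K\subseteq K_i$ for each $i$. For the reverse inclusion, let $\mu\in\bigcap_{i\in I}\lipfree{K_i}$. Since each $K_i$ is closed, contains $0$, and satisfies $\mu\in\lipfree{K_i}$, the defining property of the support yields $\supp(\mu)\subseteq K_i$ for every $i$, whence $\supp(\mu)\subseteq K$. As $0\in K$ too, we obtain $\mu\in\lipfree{\supp(\mu)\cup\set{0}}\subseteq\lipfree{K}$, and the proof is complete. (The degenerate case $I=\emptyset$ is trivial, both sides being $\lipfree{M}$.)

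I do not expect a real obstacle here: the whole argument is a formal unwinding of the two definitions. The only place asking for a little care is the bookkeeping with the base point --- one must confirm that the set $K$ produced in (i)$\Rightarrow$(ii) actually contains $0$ (it does, being an intersection of sets each containing $0$), and in (ii)$\Rightarrow$(i) one should write $\lipfree{\supp(\mu)\cup\set{0}}$ rather than $\lipfree{\supp(\mu)}$, since the support as defined need not a priori contain the base point.
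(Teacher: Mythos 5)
Your proof is correct and takes essentially the same route as the paper: for (i)$\Rightarrow$(ii) one intersects the family of all admissible closed sets and uses (i) to see that the intersection is again admissible, and (ii)$\Rightarrow$(i) is the same two-line monotonicity argument via $\supp(\mu)\subset\bigcap_{i\in I}K_i$. The only cosmetic difference is that you intersect over sets containing the base point, so your candidate support may differ from the paper's by the single point $0$; this is immaterial given the paper's own reformulation of the definition as the ``if and only if'' property for closed $L$ containing the base point, which is exactly what you verify.
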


\begin{proof}
Suppose that (i) holds and let $\mu\in\lipfree{M}$. Let $\mathcal{S}$ be the family of all closed sets $C\subset M$ such that $\mu\in\lipfree{C\cup\set{0}}$, and let $K=\bigcap_{C\in\mathcal{S}} C$. Then $\mu\in\bigcap_{C\in\mathcal{S}} \lipfree{C\cup\set{0}}=\lipfree{K\cup\set{0}}$ so $K\in\mathcal{S}$, and $K$ is clearly the smallest element of $\mathcal{S}$, so $K=\supp(\mu)$.

Now assume (ii) and let $\set{K_i:i\in I}$ be as in (i). Let $\mu\in\bigcap_{i\in I}\lipfree{K_i}$, then $\supp(\mu)\subset K_i$ for all $i$, thus $\supp(\mu)\subset\bigcap_{i\in I} K_i$ and $\mu\in\lipfree{\bigcap_{i\in I} K_i}$. Hence $\bigcap_{i\in I}\lipfree{K_i}\subset\lipfree{\bigcap_{i\in I} K_i}$; the reverse inclusion is trivial.
\end{proof}

We do not know whether $\supp(\mu)$ exists in general, but Theorem \ref{tm:f_intersection} and Proposition \ref{prop: intersections_iff_supports} give a class of metric spaces for which it does:

\begin{corollary}
\label{cr:bounded_support}
If $M$ is a bounded, complete pointed metric space, then the support of $\mu$ exists for every $\mu\in\lipfree{M}$.
\end{corollary}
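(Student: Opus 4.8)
The plan is to deduce Corollary \ref{cr:bounded_support} directly from the two immediately preceding results, since it is essentially an instantiation of their combination. First I would recall that Theorem \ref{tm:f_intersection} establishes exactly condition (i) of Proposition \ref{prop: intersections_iff_supports} under the hypothesis that $M$ is bounded and complete: for any family $\set{K_i:i\in I}$ of closed subsets of $M$ containing the base point, one has $\bigcap_{i\in I}\lipfree{K_i}=\mathcal{F}\pare{\bigcap_{i\in I}K_i}$. Then I would invoke the equivalence (i)$\Leftrightarrow$(ii) of Proposition \ref{prop: intersections_iff_supports}, which holds for an arbitrary pointed metric space and hence in particular for our bounded complete $M$, to conclude that the support of $\mu$ exists for every $\mu\in\lipfree{M}$.

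In more detail, the argument is as follows. Let $M$ be a bounded, complete pointed metric space. By Theorem \ref{tm:f_intersection}, statement (i) of Proposition \ref{prop: intersections_iff_supports} is satisfied. Since Proposition \ref{prop: intersections_iff_supports} asserts the equivalence of (i) and (ii) with no additional hypotheses on $M$, we obtain statement (ii), namely that $\supp(\mu)$ exists for every $\mu\in\lipfree{M}$. One could also spell out the construction implicit in the proof of (i)$\Rightarrow$(ii): given $\mu\in\lipfree{M}$, let $\mathcal{S}$ be the family of all closed $C\subset M$ with $\mu\in\lipfree{C\cup\set{0}}$ (which is nonempty since $M\in\mathcal{S}$), and set $K=\bigcap_{C\in\mathcal{S}}C$; then Theorem \ref{tm:f_intersection} applied to $\set{C\cup\set{0}:C\in\mathcal{S}}$ gives $\mu\in\bigcap_{C\in\mathcal{S}}\lipfree{C\cup\set{0}}=\lipfree{K\cup\set{0}}$, so $K\in\mathcal{S}$ and is visibly its least element, whence $K=\supp(\mu)$.

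I do not anticipate any real obstacle here: the corollary is a formal consequence of results already proved in full in the excerpt, and the only thing to check is that the hypotheses match up, which they do verbatim (boundedness and completeness of $M$ are precisely what Theorem \ref{tm:f_intersection} requires, and Proposition \ref{prop: intersections_iff_supports} needs nothing). If anything, the only point deserving a moment's care is noting that the family $\mathcal{S}$ above is nonempty and consists of sets containing the base point, so that Theorem \ref{tm:f_intersection} genuinely applies — but this is immediate. Consequently the proof will be a single short paragraph citing Theorem \ref{tm:f_intersection} and Proposition \ref{prop: intersections_iff_supports}.
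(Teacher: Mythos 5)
Your proof is correct and is exactly the argument the paper intends: the corollary is stated without a separate proof precisely because it is the combination of Theorem \ref{tm:f_intersection} (which supplies condition (i) of Proposition \ref{prop: intersections_iff_supports} for bounded complete $M$) with the implication (i)$\Rightarrow$(ii) of that proposition. Your spelled-out construction of $K=\bigcap_{C\in\mathcal{S}}C$ simply reproduces the proof of (i)$\Rightarrow$(ii) already given in the paper, so there is nothing to add.
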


Note that if $\mu\in\lspan\embd(M)$, that is, if $\mu=\sum_{i=1}^n a_i\embd(x_i)$ for $n\in\mathbb{N}$, $x_i\in M\setminus\set{0}$ and $a_i\in\mathbb{R}\setminus\set{0}$, then the support of $\mu$ is exactly the set $K=\set{x_1,\ldots,x_n}$. Indeed, clearly $\mu\in\lipfree{K\cup \{0\}}$, and if $L$ is a closed subset of $M$ containing the base point and such that $K\setminus L\neq\emptyset$, then we can choose $p\in K\setminus L$ and take $f\in\Lip_0(M)$ that vanishes on $L$ and $K\setminus\set{p}$ but satisfies $f(p)=1$ to show that $\mu\notin\lipfree{L}=\ideal{L}_\perp$. On the other hand, if the support of $\mu\in\lipfree{M}$ is some finite set $K=\set{x_1,\ldots,x_n}$, where $n\in\mathbb{N}$ and $x_i\in M\setminus\set{0}$, then $\mu=\sum_{i=1}^n a_i\embd(x_i)$ and all $a_i\in\mathbb{R}\setminus\set{0}$ because $K$ is the smallest such subset of $M$. Therefore, there shall be no ambiguity when speaking about finitely supported elements of $\lipfree{M}$.

\section{Extreme molecules}

We now proceed to our main result. Let $M$ be a complete pointed metric space and denote
$$
\widetilde M:=\set{(p,q)\in M^2: p\neq q}
$$
with the subspace topology of $M^2$. The \emph{de Leeuw transform} $\Phi$ assigns to a function $f:M\to\mathbb{R}$ the function $\Phi f:\widetilde M \to \mathbb{R}$ defined by
$$
\Phi f(p,q):=\frac{f(p)-f(q)}{d(p,q)}
$$
for all $(p,q)\in\widetilde{M}$. Note that if $f\in\Lip_0(M)$, then $\Phi f(p,q)=\duality{\mol{pq},f}$ and $\lipnorm{f}=\norm{\Phi f}_\infty$, so $\Phi$ is a linear isometry from $\Lip_0(M)$ into $C_b(\widetilde{M})$ --- the space of bounded continuous functions on $\widetilde{M}$, which can be identified with $C(\beta\widetilde{M})$, the space of real-valued continuous functions on the Stone-\v{C}ech compactification $\beta\widetilde{M}$ of $\widetilde{M}$. Its adjoint operator $\dual{\Phi}\colon\dual{C(\beta\widetilde{M})}\rightarrow\dual{\Lip_0(M)}$ is thus surjective; recall that $\dual{C(\beta\widetilde{M})}$ is just the space of Radon measures on $\beta\widetilde{M}$.

Now fix two distinct points $p,q$ in $M$ and consider $q$ to be the base point. Recall the following definition from \cite{AlGu_2019}:
\begin{align*}
\setd{pq}:=\big\{ \zeta\in\beta\widetilde{M}: \abs{\Phi f(\zeta)}=\lipnorm{f}\,\text{ whenever }f\in\Lip_0(M) &\text{ is such that}\\
&\Phi f(p,q)=\lipnorm{f}\big\}.
\end{align*}
Notice that $\setd{pq}$ is a compact subset of $\beta\widetilde{M}$ and that it always contains the points $(p,q)$ and $(q,p)$. In Proposition 3.5 in \cite{AlGu_2019}, the structure of the set $\setd{pq}$ was determined in the particular case when there is no $\xi\in\beta M$ such that $\slack{\xi;p,q}=0$ other than $p$ and $q$. Here, we generalize this result and show that, informally, $\setd{pq}$ lies inside $S\times S$, where $S=\set{\xi\in\beta M:\slack{\xi;p,q}=0}$ is the ``segment in the compactification''.

\begin{lemma}
\label{lm:dpq_full_version}
For any $\zeta\in\setd{pq}$ there is a net $(x_i,y_i)$ in $\widetilde{M}$ that converges to $\zeta$ in $\beta\widetilde{M}$, such that $\slack{x_i;p,q}$ and $\slack{y_i;p,q}$ converge to 0.
\end{lemma}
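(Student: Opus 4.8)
The goal is to produce, for a given $\zeta\in\setd{pq}$, a net in $\widetilde{M}$ converging to $\zeta$ along which both slack functions $\slack{\cdot\,;p,q}$ tend to $0$. The natural strategy is to argue by contradiction: if no such net exists, then by the definition of the Stone--\v{C}ech compactification and the fact that $\slack{\cdot\,;p,q}$ extends continuously to $\beta M$ (with values in $[0,\infty]$), there is a basic neighborhood of $\zeta$ in $\beta\widetilde{M}$ on which $\Phi(\slack{\cdot\,;p,q})$ --- more precisely, the continuous extension of the function $(x,y)\mapsto\min\{\slack{x;p,q},\slack{y;p,q}\}$ or of each coordinate slack --- is bounded below by some $\alpha>0$. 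Concretely, since $\widetilde{M}\subset M\times M$ and the two projection maps extend to $\beta\widetilde{M}\to\beta M$, one gets points $\xi,\eta\in\beta M$ as the images of $\zeta$ under these projections, and the assumption that no good net exists forces $\slack{\xi;p,q}>0$ or $\slack{\eta;p,q}>0$. The plan is then to manufacture, from this positive slack, a function $f\in\Lip_0(M)$ with $\Phi f(p,q)=\lipnorm f$ but $\abs{\Phi f(\zeta)}<\lipnorm f$, contradicting $\zeta\in\setd{pq}$.

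To build such an $f$, I would mimic the construction in the proof of Proposition 3.5 in \cite{AlGu_2019}: recall $q$ is the base point, so the canonical choice is (a truncation of) $f_0(x)=d(x,q)-d(x,p)+\text{const}$, which has $\lipnorm{f_0}=1$ and $\Phi f_0(p,q)=1$; one checks that $\abs{\Phi f_0(x,y)}$ is controlled by how far $x$ and $y$ are from the metric segment, i.e. it degrades precisely in terms of $\slack{x;p,q}$ and $\slack{y;p,q}$. More flexibly, given $\alpha>0$ one can perturb or truncate this function so that its de Leeuw transform stays equal to $\pm 1$ only on the set where the slack is $0$ and is strictly smaller (by a quantitative amount depending on $\alpha$) wherever the slack is at least $\alpha$; passing to the continuous extension on $\beta\widetilde{M}$ and evaluating at $\zeta$ then yields $\abs{\Phi f(\zeta)}\le 1-c(\alpha)<1=\lipnorm f$. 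This is the step I expect to be the technical heart of the argument --- getting a \emph{single} Lipschitz function (not a net of them) whose transform is uniformly bounded away from the norm on the whole ``large slack'' region, and doing so using only the positivity of slack at $\xi$ or at $\eta$ rather than at both. One should be careful that a positive slack at just one of the two projected points already suffices, because $\Phi f(p,q)=\lipnorm f$ and the symmetry $\Phi f(x,y)=-\Phi f(y,x)$ let us symmetrize.

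A cleaner route that avoids re-deriving everything: invoke Proposition 3.5 (or the relevant computations) of \cite{AlGu_2019} as a black box in the ``nice'' case and then reduce to it by an approximation/localization argument. Concretely, one could fix $\varepsilon>0$, consider the set $S_\varepsilon=\set{\xi\in\beta M:\slack{\xi;p,q}\le\varepsilon}$, and show that $\setd{pq}$ is contained in the closure of $\set{(x,y)\in\widetilde M:\slack{x;p,q}\le\varepsilon,\ \slack{y;p,q}\le\varepsilon}$ inside $\beta\widetilde M$; intersecting over all $\varepsilon>0$ and using compactness of $\setd{pq}$ together with continuity of the slack extensions would give the claim, since a point in $\beta\widetilde M$ lying in the closure of these sets for every $\varepsilon$ is exactly the limit of a net along which both slacks vanish. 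The main obstacle in either approach is the same: ensuring the ``witness'' Lipschitz function has its maximal slope attained only on the true segment and is quantitatively below the norm elsewhere, uniformly over the compact piece of $\beta\widetilde M$ we care about; once that is in hand, the contradiction with membership in $\setd{pq}$ and the extraction of the desired net are routine.
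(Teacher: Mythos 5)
Your high-level strategy is the same as the paper's: project $\zeta$ to points $\xi,\eta\in\beta M$, assume one of them has positive slack, and exhibit a witness $f\in\ball{\Lip_0(M)}$ with $\Phi f(p,q)=\lipnorm{f}$ but $\abs{\Phi f(\zeta)}<\lipnorm{f}$. However, the step you yourself flag as ``the technical heart'' is genuinely missing, and the specific candidate you propose does not work. Take $f_0(z)=\tfrac{1}{2}\bigl(d(z,q)-d(z,p)+d(p,q)\bigr)$, so that $\lipnorm{f_0}=1$ and $\Phi f_0(p,q)=1$. A direct computation gives
$$
1-\Phi f_0(x,y)=\frac{\slack{y;x,q}+\slack{x;y,p}}{2\,d(x,y)},
$$
so $\Phi f_0(x,y)=1$ exactly when $y\in[x,q]$ and $x\in[y,p]$ --- and this can happen with both $\slack{x;p,q}$ and $\slack{y;p,q}$ large. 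Concretely, the four-point metric space with $d(p,q)=d(x,y)=2$, $d(p,x)=d(q,y)=1$, $d(q,x)=d(p,y)=3$ satisfies all triangle inequalities, yet $\Phi f_0(x,y)=1$ while $\slack{x;p,q}=\slack{y;p,q}=2$. So no truncation or perturbation of $d(\cdot,q)-d(\cdot,p)$ can be ``uniformly below $1$ on the whole large-slack region''; indeed no single function independent of $\zeta$ is claimed or needed. Your alternative ``cleaner route'' is circular: the statement that $\setd{pq}$ is contained in the closure of the small-slack pairs for every $\varepsilon$ \emph{is} the lemma, not a reduction of it.

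What the paper actually does is construct the witness \emph{adapted to a net} $(x_i,y_i)\to\zeta$. After reducing (via the cases already treated in Proposition 3.5 of the cited reference) to the situation $\slack{\xi;p,q}=0$, $\xi\neq p,q$, $\slack{\eta;p,q}>0$, it extracts a subnet along which four ratios such as $\slack{y_i;x_j,q}/d(y_i,q)$ and $d(y_i,q)/d(y_i,x_j)$ are bounded below (treating the bounded and unbounded regimes of $d(y_i,q)$ separately), and then defines $f$ only on $\set{p,q}\cup\set{x_i}\cup\set{y_i}$ by $f(z)=d(z,q)$ off the $y$-part and $f(z)=(1-c/2)\,d(z,q)$ on it, before extending to $M$. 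The multiplicative dampening on the $y$'s, combined with the ratio bounds, is what forces $\abs{\Phi f(x_i,y_i)}\leq 1-c\delta/2$ along the net. None of this is recoverable from the outline you give, so the proposal as it stands has a genuine gap at its central step.
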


\begin{proof}
Let $\zeta\in\beta\widetilde{M}$, then there is a net $(x_i,y_i)$, $i\in I$, in $\widetilde{M}$ that converges to $\zeta$ in $\beta\widetilde{M}$, and we may choose a subnet such that $(x_i)$ and $(y_i)$ converge to elements $\xi$ and $\eta$, respectively, in $\beta M$; call this subnet $(x_i,y_i)$ again. We want to show that $\zeta\in\setd{pq}$ implies that $\slack{\xi;p,q}=\slack{\eta;p,q}=0$. To do so, we assume without loss of generality that $\slack{\eta;p,q}>0$ and we will construct $f\in\ball{\Lip_0(M)}$ such that $\Phi f(p,q)=1$ and $\abs{\Phi f(\zeta)}<1$, concluding thus that $\zeta\notin\setd{pq}$. There are three possibilities:

\begin{enumerate}[label={\upshape{(\roman*)}}]
\item $\slack{\xi;p,q}>0$.
\item $\slack{\xi;p,q}=0$ but $\xi\neq p,q$.
\item $\xi\in\set{p,q}$.
\end{enumerate}

Cases (i) and (iii) were dealt with in the proof of Proposition 3.5 in \cite{AlGu_2019} so we will only prove (ii), using a similar technique.

Suppose then that $\slack{\eta;p,q}>0$ and $\slack{\xi;p,q}=0$ but $\xi\neq p,q$. Since $p,q,\xi,\eta$ are all distinct, we may replace $(x_i,y_i)$ with a subnet such that the sets $\set{x_i:i\in I}$ and $\set{y_i:i\in I}$ are disjoint and do not contain $p$ or $q$. We now claim the following:

\begin{claim}
We may replace $(x_i,y_i)$, $i\in I$, with a subnet such that
$$
\inf_{i\in I} \frac{\slack{y_i;p,q}}{d(y_i,q)} >0 ,\; \inf_{i,j\in I} \frac{\slack{y_i;x_j,q}}{d(y_i,q)} >0,\; \inf_{i\in I} \frac{d(y_i,q)}{d(y_i,p)}>0 \quad \text{and} \quad \inf_{i,j\in I} \frac{d(y_i,q)}{d(y_i,x_j)} >0 \text{.}
$$
\end{claim}

\begin{proof}[Proof of the claim]
Since $\lim_i\slack{x_i;p,q}=0$, we may choose a subnet such that $d(x_i,q)$ is bounded. We may also either choose a subnet such that $d(y_i,q)\rightarrow\infty$ or one such that $d(y_i,q)$ is bounded. We split the proof into these two cases.

Suppose first that we take a subnet such that $d(y_i,q)\leq C_1$ for some $C_1<\infty$ and all $i\in I$. It is easy to check that the identity
$$
\slack{y;x,q}=\slack{x;y,p}+\slack{y;p,q}-\slack{x;p,q}
$$
holds for any $p,q,x,y\in M$. In particular, it implies that
$$
\slack{y_i;x_j,q}\geq\slack{y_i;p,q}-\slack{x_j;p,q}
$$
for any $i,j\in I$. Since $\lim_i\slack{x_i;p,q}=0$ and $\lim_i\slack{y_i;p,q}>0$, we may choose a subnet such that $\slack{y_i;p,q}\geq\delta$ and $\slack{x_i;p,q}\leq\delta/2$ for some $\delta>0$, so that
$$
\frac{\slack{y_i;x_j,q}}{d(y_i,q)}\geq\frac{\delta}{2C_1} \quad \text{and} \quad \frac{\slack{y_i;p,q}}{d(y_i,q)}\geq\frac{\delta}{C_1}
$$
for all $i,j\in I$. Also $y_i\rightarrow\eta\neq q$, hence we may take a subnet such that $d(y_i,q)\geq C_2$ for some $C_2>0$ and all $i\in I$. If $C_3<\infty$ is such that $d(x_i,q)\leq C_3$ for all $i\in I$, we obtain
$$
\frac{d(y_i,q)}{d(y_i,x_j)}\geq\frac{d(y_i,q)}{d(y_i,q)+d(x_j,q)}\geq\frac{C_2}{C_1+C_3}
$$
for all $i,j\in I$, and similarly $d(y_i,q)/d(y_i,p)\geq C_2/(C_1+d(p,q))$.

Now assume that we take a subnet such that $d(y_i,q)\rightarrow\infty$ instead. Then also $d(y_i,p)\geq d(y_i,q)-d(p,q)\rightarrow\infty$ and
$$
\limsup_i \frac{d(y_i,p)}{d(y_i,q)}\leq\limsup_i \frac{d(y_i,q)+d(q,p)}{d(y_i,q)}=1+\limsup_i\frac{d(p,q)}{d(y_i,q)}=1.
$$
By symmetry in $p$ and $q$ we get $\lim_i d(y_i,p)/d(y_i,q)=1$. Hence
$$
\lim_i \frac{\slack{y_i;p,q}}{d(y_i,q)}=1+\lim_i\frac{d(y_i,p)-d(p,q)}{d(y_i,q)}=2,
$$
so we may take a subnet where $\slack{y_i;p,q}/d(y_i,q)$ and $d(y_i,q)/d(y_i,p)$ are bounded below by a positive constant. Also, since $d(x_i,q)$ is bounded, we may choose a further subnet such that $d(x_j,q)/d(y_i,q)\leq1/2$ for all $i,j$, and then
$$
\frac{\slack{y_i;x_j,q}}{d(y_i,q)}=1+\frac{d(y_i,x_j)-d(x_j,q)}{d(y_i,q)}\geq\frac{1}{2}+\frac{d(y_i,x_j)}{d(y_i,q)}\geq\frac{1}{2} \,.
$$
Finally,
$$
\frac{d(y_i,x_j)}{d(y_i,q)}\leq 1+\frac{d(x_j,q)}{d(y_i,q)}\leq\frac{3}{2}
$$
and so $d(y_i,q)/d(y_i,x_j)\geq 2/3$ for all $i,j$.
\end{proof}

Now we continue with the proof of Lemma \ref{lm:dpq_full_version}. Using the Claim, replace $(x_i,y_i)$ with a subnet and choose $c>0$ and $\delta>0$ such that
$$
c<\min\set{2,\inf_{i,j\in I} \frac{\slack{y_i;x_j,q}}{d(y_i,q)},\inf_{i\in I} \frac{\slack{y_i;p,q}}{d(y_i,q)}}
$$
and
$$
\delta<\min\set{1,\inf_{i,j\in I} \frac{d(y_i,q)}{d(y_i,x_j)}, \inf_{i\in I} \frac{d(y_i,q)}{d(y_i,p)}} \,.
$$Let $X=\set{x_i:i\in I}$, $Y=\set{y_i:i\in I}$ and $Z=\set{p,q}\cup X\cup Y$. Define $f\colon Z\rightarrow\mathbb{R}$ by
$$
f(z)=\begin{cases}
d(z,q) & \text{if } z\in Z\setminus Y \\
(1-c/2)\cdot d(z,q) & \text{if } z\in Y \text{.}
\end{cases}
$$
It is clear that $\Phi f(p,q)=1$, $\abs{\Phi f(x,y)}\leq 1$ for $x,y\in Z\setminus Y$ and $\abs{\Phi f(x,y)}\leq 1-c/2$ for $x,y\in Y$. Moreover, if $y\in Y$ then $\Phi f(y,q)=1-c/2$, and for any $x\in X\cup\set{p}$ we have
\begin{align*}
1+\Phi f(y,x) &= \frac{\slack{y;x,q}-c/2\cdot d(y,q)}{d(y,x)} \geq (c-c/2)\cdot\frac{d(y,q)}{d(y,x)} \geq \frac{c\delta}{2}, \\
1-\Phi f(y,x) &= \frac{\slack{x;y,q}+c/2\cdot d(y,q)}{d(y,x)} \geq \frac{c}{2}\cdot\frac{d(y,q)}{d(y,x)} \geq \frac{c\delta}{2},
\end{align*}
so $\abs{\Phi f(y,x)}\leq 1-c\delta/2$. We conclude that $\lipnorm{f}=1$. Now extend $f$ from $Z$ to $M$. Then $f\in\ball{\Lip_0(M)}$, $\Phi f(p,q)=1$, and $\abs{\Phi f(\zeta)}=\lim_i\abs{\Phi f(x_i,y_i)}\leq 1-c\delta/2<1$, hence $\zeta\notin\setd{pq}$.
\end{proof}

Let us now define the set $\sete{pq}$ as
\begin{align}
\label{def:set_Epq}
\sete{pq}:=\big\{ \phi\in\ball{\dual{\Lip_0(M)}}: \text{$\duality{f,\phi}=1$ for every $f\in\ball{\Lip_0(M)}$} & \\\nonumber
\text{such that $\Phi f(p,q)=1$} \big\} &.
\end{align}
Notice that $\sete{pq}$ is a \weaks-compact and convex subset of the ball of $\dual{\Lip_0(M)}$ and that it contains $\mol{pq}$. The importance of this set lies in the following observation, which goes back to \cite{deLeeuw_1961}: suppose that $\mol{pq}$ is a convex combination of some $m,m'\in\ball{\lipfree{M}}$, i.e.
$$
\mol{pq}=tm+(1-t)m'
$$
for some $t\in (0,1)$. If $f\in\ball{\Lip_0(M)}$ is such that $\Phi f(p,q)=1$, the inequalities
\begin{align*}
1=\duality{\mol{pq},f} &=t\duality{m,f}+(1-t)\duality{m',f} \\
&\leq t\norm{m}\lipnorm{f}+(1-t)\norm{m'}\lipnorm{f}\leq 1
\end{align*}
hold and so $\duality{m,f}=\duality{m',f}=1$. It follows that $m,m'\in\sete{pq}$. Hence, in order to show that $\mol{pq}$ is an extreme point of $\ball{\lipfree{M}}$, it suffices to show that \mbox{$\sete{pq}\cap\lipfree{M}=\set{\mol{pq}}$.} To this end, we start with a generalization of Lemma 3.3 in \cite{AlGu_2019}:

\begin{lemma}
\label{lm:measure_bound}
Let $K$ be a closed subset of $\beta\widetilde{M}$ such that $\setd{pq}\cap K=\emptyset$. Then there is a constant $C$, depending on $K$, such that
$$
\abs{\mu}(K)\leq C\cdot(\norm{\mu}-1)
$$
for any measure $\mu\in\dual{C(\beta\widetilde{M})}$ such that $\dual{\Phi}\mu\in\sete{pq}$.
\end{lemma}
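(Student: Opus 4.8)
The plan is to rewrite the hypothesis $\dual\Phi\mu\in\sete{pq}$ as a family of integral identities for $\mu$, and then to convert the qualitative fact $K\cap\setd{pq}=\emptyset$ into a uniform quantitative estimate by a compactness argument. The first observation is that if $\mu\in\dual{C(\beta\widetilde M)}$ satisfies $\dual\Phi\mu\in\sete{pq}$, then for every $f\in\ball{\Lip_0(M)}$ with $\Phi f(p,q)=1$ we have
$$
\int_{\beta\widetilde M}\Phi f\,d\mu=\duality{\Phi f,\mu}=\duality{f,\dual\Phi\mu}=1,
$$
while $\norm{\Phi f}_\infty=\lipnorm f\le 1$ because $\Phi$ is an isometry of $\Lip_0(M)$ into $C(\beta\widetilde M)$. (Taking $f(x)=d(x,q)$, which is admissible since $q$ is the base point and satisfies $\Phi f(p,q)=1$, already gives $\norm\mu\ge 1$, so $\norm\mu-1\ge 0$.)

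Second, I would attach a good test function to each point of $K$. Fix $\zeta\in K$. Since $\zeta\notin\setd{pq}$, the definition of $\setd{pq}$ provides $f_\zeta\in\Lip_0(M)$ with $\Phi f_\zeta(p,q)=\lipnorm{f_\zeta}$ but $\abs{\Phi f_\zeta(\zeta)}<\lipnorm{f_\zeta}$; necessarily $\lipnorm{f_\zeta}>0$ (otherwise $f_\zeta=0$ and the strict inequality fails), so after dividing by $\lipnorm{f_\zeta}$ we may assume $\lipnorm{f_\zeta}=1$. Thus $f_\zeta\in\ball{\Lip_0(M)}$, $\Phi f_\zeta(p,q)=1$, and $\abs{\Phi f_\zeta(\zeta)}=1-2\varepsilon_\zeta$ for some $\varepsilon_\zeta>0$. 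As $\Phi f_\zeta$ is continuous on $\beta\widetilde M$, there is an open neighborhood $U_\zeta$ of $\zeta$ on which $\abs{\Phi f_\zeta}\le 1-\varepsilon_\zeta$. The family $\set{U_\zeta:\zeta\in K}$ is an open cover of the compact set $K$, so finitely many $U_{\zeta_1},\dots,U_{\zeta_n}$ already cover $K$; set $C=\sum_{j=1}^n\varepsilon_{\zeta_j}^{-1}$, a constant that depends only on $K$ (and on $p,q$).

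Third, I would combine the two ingredients. For any $\mu$ with $\dual\Phi\mu\in\sete{pq}$ and each $j$, using $\abs{\Phi f_{\zeta_j}}\le 1$ everywhere on $\beta\widetilde M$ and $\abs{\Phi f_{\zeta_j}}\le 1-\varepsilon_{\zeta_j}$ on $U_{\zeta_j}$, we get
$$
1=\abs{\int_{\beta\widetilde M}\Phi f_{\zeta_j}\,d\mu}\le\int_{\beta\widetilde M}\abs{\Phi f_{\zeta_j}}\,d\abs\mu\le\norm\mu-\varepsilon_{\zeta_j}\,\abs\mu(U_{\zeta_j}),
$$
hence $\abs\mu(U_{\zeta_j})\le\varepsilon_{\zeta_j}^{-1}(\norm\mu-1)$. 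Summing over $j$ and using $\abs\mu(K)\le\sum_{j=1}^n\abs\mu(U_{\zeta_j})$ yields $\abs\mu(K)\le C(\norm\mu-1)$, which is the assertion.

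I do not expect a genuine obstacle here; this is the quantitative refinement of Lemma \ref{lm:dpq_full_version} (and of Lemma 3.3 in \cite{AlGu_2019}) that one would anticipate. The only points that need attention are the harmless normalization $\lipnorm{f_\zeta}=1$ and, above all, the order of quantifiers: the finite subcover, and hence the single constant $C$, must be chosen \emph{before} $\mu$. This is legitimate because the functions $f_{\zeta_j}$ and the neighborhoods $U_{\zeta_j}$ depend only on $K$ (and $p,q$), so one and the same $C$ serves all measures $\mu$ with $\dual\Phi\mu\in\sete{pq}$ simultaneously.
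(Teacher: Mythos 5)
Your proof is correct and follows essentially the same argument as the paper: negate membership in $\setd{pq}$ at each point of $K$ to get a normalized test function and a neighborhood on which $\abs{\Phi f}$ is bounded away from $1$, extract a finite subcover by compactness, and sum the resulting estimates $\abs{\mu}(U_{\zeta_j})\leq\varepsilon_{\zeta_j}^{-1}(\norm{\mu}-1)$. The only cosmetic difference is your explicit normalization of $\lipnorm{f_\zeta}$ and the parametrization of the gap by $\varepsilon_\zeta$ instead of the paper's $c_\zeta\in(0,1)$; the quantifier order you flag is handled identically in the paper.
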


Here, $\abs{\mu}\in\dual{C(\beta\widetilde{M})}$ denotes the total variation of $\mu$, as usual.

\begin{proof}
Let $\zeta\in K$. Then $\zeta\notin\setd{pq}$ so there is an $f\in\ball{\Lip_0(M)}$ such that $\Phi f(p,q)=1$ and $\abs{\Phi f(\zeta)}<1$ and, since $\Phi f$ is continuous, there are $c_\zeta\in (0,1)$ and an open neighborhood $V_\zeta$ of $\zeta$ such that $\abs{\Phi f(\zeta^\prime)}\leq c_\zeta$ for every $\zeta^\prime\in V_\zeta$. Moreover, if $\mu\in\dual{C(\beta\widetilde{M})}$ is a measure such that $\dual{\Phi}\mu\in\sete{pq}$, then we have
\begin{align*}
1=\duality{f,\dual{\Phi}\mu}=\int_{\beta\widetilde{M}}(\Phi f)\,d\mu&=\int_{V_\zeta}(\Phi f)\,d\mu+\int_{\beta\widetilde{M}\setminus V_\zeta}(\Phi f)\,d\mu \\
&\leq c_\zeta\abs{\mu}(V_\zeta)+\abs{\mu}(\beta\widetilde{M}\setminus V_\zeta) \\
&= \norm{\mu}-(1-c_\zeta)\abs{\mu}(V_\zeta)
\end{align*}
hence
$$
\abs{\mu}(V_\zeta)\leq\frac{\norm{\mu}-1}{1-c_\zeta} \,.
$$

Now, $\set{V_\zeta:\zeta\in K}$ is an open cover of the compact set $K$, so it admits a finite subcover $K\subset\bigcup_{j=1}^{n} V_{\zeta_j}$. Thus, for any $\mu\in\dual{C(\beta\widetilde{M})}$ such that $\dual{\Phi}\mu\in\sete{pq}$ we have
$$
\abs{\mu}(K)\leq\sum_{j=1}^n\abs{\mu}(V_{\zeta_j})\leq C\cdot(\norm{\mu}-1),
$$
where $C=\sum_{j=1}^n (1-c_{\zeta_j})^{-1}<\infty$.
\end{proof}

The following lemma shows that if $\mol{pq}$ is a convex combination of two elements $m,m'$ of the unit ball, then $m,m'$ must be supported on the segment $[p,q]$.

\begin{lemma}
\label{lm:convex_comb_mol}
For the set $\sete{pq}$ defined as in (\ref{def:set_Epq}), we have $\sete{pq}\cap\lipfree{M}\subset\lipfree{[p,q]}.$
\end{lemma}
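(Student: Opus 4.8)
Since $q$ is the base point and $q\in[p,q]$, the space $\lipfree{[p,q]}$ is defined and equals $\ideal{[p,q]}_\perp$, so it suffices to show that any $\phi\in\sete{pq}\cap\lipfree{M}$ kills every $g\in\Lip_0(M)$ vanishing on $[p,q]$. First observe $\norm{\phi}=1$: testing against $d(\cdot,q)\in\ball{\Lip_0(M)}$, for which $\Phi(d(\cdot,q))(p,q)=1$, gives $\duality{d(\cdot,q),\phi}=1$. As $\Phi$ is a linear isometry, its adjoint $\dual\Phi$ is a metric surjection onto $\dual{\Lip_0(M)}$, so pick a Radon measure $\mu$ on $\beta\widetilde{M}$ with $\dual\Phi\mu=\phi$ and $\norm{\mu}=1$. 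Applying Lemma \ref{lm:measure_bound} to any compact set disjoint from $\setd{pq}$ yields $\abs{\mu}$-measure $0$, so by inner regularity $\abs{\mu}$ is concentrated on $\setd{pq}$.

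\textbf{Extremal functions.} A short triangle-inequality argument shows that if $f\in\ball{\Lip_0(M)}$ and $\Phi f(p,q)=1$, then $\lipnorm{f}=1$ and $f$ coincides with $d(\cdot,q)$ on all of $[p,q]$ (and conversely every $\lipnorm{\cdot}$-one extension of $d(\cdot,q)|_{[p,q]}$ qualifies); write $\mathcal D$ for this family. For $f\in\mathcal D$ we have $\duality{f,\phi}=1=\int_{\setd{pq}}\Phi f\,d\mu$, and since $\abs{\Phi f}\le 1$ on $\beta\widetilde M$ while $\abs{\mu}(\setd{pq})=\norm{\mu}=1$, this forces $\Phi f$ to agree $\abs\mu$-a.e.\ on $\setd{pq}$ with the polar sign of $\mu$; in particular $\Phi(f-f')=0$ $\mu$-a.e., so $\phi$ annihilates $\lspan(\mathcal D-\mathcal D)\subset\ideal{[p,q]}$. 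Moreover, by Lemma \ref{lm:dpq_full_version} every $\zeta\in\setd{pq}$ is a limit of a net $(x_i,y_i)$ in $\widetilde M$ with $\slack{x_i;p,q}\to 0$ and $\slack{y_i;p,q}\to 0$; since $\widetilde M$ embeds in $\beta\widetilde M$ and $\slack{\cdot;p,q}$ is continuous, $\setd{pq}\cap\widetilde M\subset\widetilde{[p,q]}$, where every $g$ vanishing on $[p,q]$ satisfies $\Phi g\equiv 0$. Thus $\int_{\setd{pq}}\Phi g\,d\mu$ reduces to an integral over $\setd{pq}\setminus\widetilde M$; passing to subnets converging in $\beta M$ also disposes of those $\zeta$ whose limit points in $\beta M$ lie in $[p,q]$ and are distinct, since then $g(x_i),g(y_i)\to 0$ and $d(x_i,y_i)$ stays bounded away from $0$.

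\textbf{The main obstacle.} What remains — and this is the heart of the matter — is the \emph{degenerate} part of $\setd{pq}\setminus\widetilde M$: points $\zeta$ all of whose generating nets either collapse to a single point of $\beta M$ or escape to $\beta M\setminus M$. On such $\zeta$ the value $\Phi g(\zeta)$ genuinely need not vanish for $g\in\ideal{[p,q]}$, which is exactly the gap between extreme and preserved extreme points (cf.\ Theorem \ref{tm:prext_strexp}\ref{tm:prext_strexp:prext}), so one must use $\phi\in\lipfree{M}$ to show $\abs{\mu}$ does not charge them. Here I would invoke Lemma \ref{lm:l1_series_argument} to write $\phi=\sum_n a_n\mol{p_nq_n}$ with $\sum_n\abs{a_n}$ arbitrarily close to $\norm{\phi}=1$, and pair the partial sums with the two canonical members $d(\cdot,q)$ and $d(p,q)-\min(d(\cdot,p),d(p,q))$ of $\mathcal D$; the resulting near-extremality of the molecules $\mol{p_nq_n}$ confines the weight-carrying pairs $(p_n,q_n)$ to a bounded region of $M$, so that $\phi\in\lipfree{N}$ for some bounded closed $N\ni q$. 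Then Corollary \ref{cr:bounded_support} (through Theorem \ref{tm:f_intersection}) provides $\supp(\phi)$, and it remains to prove $\supp(\phi)\subset[p,q]$, i.e.\ that $\phi$ still annihilates $\ideal{[p,q]}$ in the bounded setting. This last point I would attack through $\mu$ once more, now using boundedness to multiply elements of $\lipfree M$ by Lipschitz functions (Lemma \ref{lm:multiply_element}) and the cut-offs $x\mapsto\min\{d(x,[p,q]),r\}$, in order to push $\mu$ onto the closure of $\widetilde{[p,q]}$ in $\beta\widetilde M$, where $\Phi g$ vanishes for every $g\in\ideal{[p,q]}$.

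\textbf{Summary.} The de Leeuw-transform skeleton (lift $\phi$ to $\mu$, concentrate $\mu$ on $\setd{pq}$ via Lemma \ref{lm:measure_bound}, locate $\setd{pq}$ near the segment via Lemma \ref{lm:dpq_full_version}) and the identification of $\mathcal D$ are routine; the genuine difficulty, where I expect the real work to lie, is using $\phi\in\lipfree{M}$ to kill the degenerate part of $\setd{pq}$. It is quite plausible that the cleanest exposition isolates this control as a separate lemma about $\lipfree{M}$-valued elements of $\sete{pq}$.
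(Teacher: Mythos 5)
There is a genuine gap, and you have located it yourself: everything after ``\textbf{The main obstacle}'' is a plan, not a proof, and the plan as stated is circular (``it remains to prove $\supp(\phi)\subset[p,q]$'' is essentially the statement being proved, and no mechanism is given for ``pushing $\mu$ onto the closure of $\widetilde{[p,q]}$''). The difficulty is created by your very first move: lifting $\phi$ to a \emph{norm-one} Radon measure $\mu$ on $\beta\widetilde{M}$ via surjectivity of $\dual{\Phi}$. Such a lift is abstract and may well charge the part of $\setd{pq}$ lying in the corona $\beta\widetilde{M}\setminus\widetilde{M}$, where $\Phi g$ need not vanish for $g\in\ideal{[p,q]}$; nothing in your argument rules this out, and the hypothesis $\phi\in\lipfree{M}$ is never actually brought to bear on $\mu$.

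The paper's proof avoids this by never using an exact norm-one lift. Instead, for each $k$ it takes the representation $\phi=\sum_n a_n\mol{p_nq_n}$ with $\sum_n\abs{a_n}\le 1+1/k$ from Lemma \ref{lm:l1_series_argument} and lifts it to the \emph{atomic} measure $\mu_k=\sum_n a_n\delta_{(p_n,q_n)}$, which satisfies $\dual{\Phi}\mu_k=\phi$, $\norm{\mu_k}\le 1+1/k$, and --- crucially --- is carried by $\widetilde{M}$ itself, so there is no degenerate part at all. Lemma \ref{lm:measure_bound} then bounds $\abs{\mu_k}(\beta\widetilde{M}\setminus U)$ by $C_U/k$ for any open $U\supset\setd{pq}$; discarding the atoms outside $U$ changes $\phi$ by at most $C_U/k$ and leaves an element of $\lipfree{\pi(U\cap\widetilde{M})}$, so letting $k\to\infty$ gives $\phi\in\lipfree{\pi(U\cap\widetilde{M})}$. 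Applying this with $U_n=\set{\zeta:\max\set{\slack{\cdot\,;p,q}\circ\pi_1,\slack{\cdot\,;p,q}\circ\pi_2}(\zeta)<1/n}$ (which contains $\setd{pq}$ by Lemma \ref{lm:dpq_full_version}) yields $\phi\in\lipfree{S_n}$ with $S_n=\set{x:\slack{x;p,q}\le 1/n}$, and Theorem \ref{tm:f_intersection} applied to the bounded set $S_1$ gives $\phi\in\bigcap_n\lipfree{S_n}=\lipfree{[p,q]}$. So the intersection theorem is indeed the closing step, as you guessed, but the missing idea is the use of near-optimal atomic lifts in place of an exact lift --- that is what eliminates the degenerate part rather than having to fight it.
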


\begin{proof}
Let $\pi_1,\pi_2\colon\widetilde{M}\rightarrow M$ be the projection mappings given by $\pi_1(x,y)=x$ and $\pi_2(x,y)=y$. For a set $A\subset\widetilde{M}$, denote $\pi(A)=\pi_1(A)\cup\pi_2(A)$, i.e. $\pi(A)$ is the set of points of $M$ appearing as either coordinate of an element of $A$.

\begin{claim}
\label{lm:m_supp_constraint}
If $U$ is an open subset of $\beta\widetilde{M}$ such that $\setd{pq}\subset U$, then
$\sete{pq}\cap\lipfree{M}$ is a subset of $\lipfree{\pi(U\cap\widetilde{M})}$.
\end{claim}

\begin{proof}[Proof of the Claim]
Denote $N=\pi(U\cap\widetilde{M})$, and let $m\in\sete{pq}\cap\lipfree{M}$ and $k\in\mathbb{N}$. By Lemma \ref{lm:l1_series_argument}, $m$ admits a representation $m=\sum_{n=1}^\infty a_n\mol{p_nq_n}$ where $(p_n,q_n)\in\widetilde{M}$ for all $n\in\mathbb{N}$ and $\sum_{n=1}^\infty\abs{a_n}\leq 1+1/k$. Let $I=\set{n\in\mathbb{N}:(p_n,q_n)\in U}$ and $m_k=\sum_{n\in I} a_n\mol{p_nq_n}$. Notice that $p_n,q_n\in N$ for each $n\in I$, hence $m_k\in\lipfree{N}$.

It is not difficult to observe that $\dual{\Phi}\delta_{(x,y)}=\mol{xy}$ for any $(x,y)\in\widetilde{M}$, where $\delta_{(x,y)}\in\ball{\dual{C(\beta\widetilde{M})}}$ is the evaluation functional at $(x,y)\in\widetilde{M}$. Hence, if we denote $\mu=\sum_{n=1}^\infty a_n\delta_{(p_n,q_n)}$, then $\mu\in\dual{C(\beta\widetilde{M})}$ as the series is absolutely convergent, $\norm{\mu}\leq\sum_{n=1}^\infty\abs{a_n}\leq 1+1/k$, and $\dual{\Phi}\mu=m$.

Denote $K=\beta\widetilde{M}\setminus U$ and let $C$ be the constant assigned to $K$ by Lemma \ref{lm:measure_bound}. For each $f\in\Lip_0(M)$ we have
\begin{align*}
\duality{m-m_k,f} &= \sum_{n\notin I}a_n\Phi f(p_n,q_n) = \sum_{\set{n\in\mathbb{N}:(p_n,q_n)\in K}}a_n\Phi f(p_n,q_n) \\
&= \int_{\beta\widetilde{M}} (\Phi f)\cdot\chi_K \,d\mu = \int_K (\Phi f)\,d\mu,
\end{align*}
where $\chi_K$ is the characteristic function of $K$. So,
$$
\abs{\duality{m-m_k,f}} = \abs{\int_K (\Phi f)\,d\mu} \leq \norm{\Phi f}_\infty\cdot\abs{\mu}(K) \leq \lipnorm{f}\cdot C/k
$$
and $\norm{m-m_k}\leq C/k$. Hence, if $k\rightarrow\infty$ then $m_k\rightarrow m$ and thus $m$ is in the closed subspace $\lipfree{N}$ of $\lipfree{M}$.
\end{proof}

Now, to proceed with the proof of Lemma \ref{lm:convex_comb_mol}, take the continuous function $\varphi\colon\widetilde{M}\rightarrow [0,\infty)$ defined by
$$
\varphi(x,y)=\max\set{\slack{x;p,q},\slack{y;p,q}}
$$
and extend $\varphi$ to a continuous function $\varphi\colon\beta\widetilde{M}\rightarrow [0,\infty]$. Consider the sets
$$
S_n = \set{x\in M : \slack{x;p,q}\leq\tfrac{1}{n}}
$$
and
$$
U_n = \set{\zeta\in\beta\widetilde{M} : \varphi(\zeta)<\tfrac{1}{n}}
$$
for $n\in\mathbb{N}$. Notice that $U_n$ is open in $\beta\widetilde{M}$ and that $\pi(U_n\cap\widetilde{M})\subset S_n$ by definition. For each $n$, Lemma \ref{lm:dpq_full_version} implies that $\setd{pq}\subset U_n$, and applying the Claim we get $\sete{pq}\cap\lipfree{M}\subset\lipfree{S_n}$. Thus
$$
\sete{pq}\cap\lipfree{M}\subset\bigcap_{n=1}^\infty\lipfree{S_n} \,.
$$
Since all $S_n$ are subsets of $S_1$, we have also $\bigcap_{n=1}^\infty\lipfree{S_n}\subset\lipfree{S_1}$. We may therefore apply Theorem \ref{tm:f_intersection} to the bounded metric space $S_1$ and obtain
$$
\bigcap_{n=1}^\infty\lipfree{S_n}=\mathcal{F}\pare{\bigcap_{n=1}^\infty S_n}=\lipfree{[p,q]} \,.
$$
\end{proof}

The main result is now an immediate consequence of Lemma \ref{lm:convex_comb_mol}:

\begin{proof}[Proof of Theorem \ref{tm:main_theorem}]
First, let $\mu$ be a finitely supported extreme point of $\ball{\lipfree{M}}$ and let $K\subset M$ be its support. Then $\mu$ is also an extreme point of $\ball{\lipfree{K}}$, hence preserved extreme in $\ball{\lipfree{K}}$, and therefore it must be an elementary molecule $\mol{pq}$ by Corollary 2.5.4 in \cite{Weaver}. The fact that $[p,q]=\set{p,q}$ is proven easily, e.g. in Proposition 2.2 in \cite{AlGu_2019}. 

On the other hand, assume that $[p,q]=\set{p,q}$ and suppose that we have \mbox{$\mol{pq}=\frac{1}{2}(m+m')$} for some $m,m'\in\ball{\lipfree{M}}$. As we have already remarked before Lemma \ref{lm:measure_bound}, $m, m'$ must belong to $\sete{pq}$. Taking $q$ as the base point of $M$, Lemma \ref{lm:convex_comb_mol} implies that $m\in\lipfree{[p,q]}=\lipfree{\set{p,q}}=\lspan\embd(p)$. Since $\norm{m}=1$, it follows that \mbox{$m=\pm\embd(p)/d(p,q)=\pm\mol{pq}$.} The case $m=-\mol{pq}$ is clearly impossible, thus $m=\mol{pq}$ and $\mol{pq}\in\ext\ball{\lipfree{M}}$.
\end{proof}

\begin{example}
\label{ex:all_ext_unprev}
An application of Theorem \ref{tm:main_theorem} allows us to show that there exists a complete metric space $M$ such that all of its elementary molecules are extreme but none of them are preserved. Indeed, let $M$ be the space described in Example 2.4 in \cite{IvKaWe_2007}. It is shown there to have the following properties:

\begin{enumerate}[label={\upshape{(\roman*)}}]
\item it is a closed subset of a strictly convex Banach space,
\item it contains no nontrivial linear segments,
\item it is ``almost metrically convex'', i.e. a length space.
\end{enumerate}

It follows from (i) and (ii) that $M$ contains no nontrivial metric segments, hence Theorem \ref{tm:main_theorem} implies that all elementary molecules are extreme points of $\ball{\lipfree{M}}$. However, by (iii) and Proposition 5.9 in \cite{GaPrRu_2018}, $\ball{\lipfree{M}}$ has no preserved extreme point.
\end{example}

\section{Open questions}

The main question regarding extremal structure of Lipschitz-free spaces remains open and reads as follows.

\begin{question}[\cite{AlGu_2019}]
\label{q:ext}
Is every extreme point of $\ball{\lipfree{M}}$ an elementary molecule?
\end{question}

By the argument used in the proof of Theorem \ref{tm:main_theorem}, this is equivalent to all extreme points of $\ball{\lipfree{M}}$ having finite support. This is known to be true in certain cases where $M$ is bounded and $\lipfree{M}$ is the dual of either the well-known space $\lip_0(M)$ of ``little Lipschitz functions'' (see Section 3.3 of \cite{Weaver}) or a subspace thereof \cite{GPPR_2018}. This holds, in particular, when $M$ is compact and either countable \cite{Dalet_2015} or ultrametric \cite{Dalet_2015_2}. However, it is not known whether the answer to Question \ref{q:ext} is positive under the assumption that $M$ is compact, or that $\lipfree{M}$ is a dual Banach space, or even both at the same time. Maybe the results of Section \ref{section: intersections} of the present note could be helpful for studying the supports of extreme points of $\ball{\lipfree{M}}$ for bounded $M$.

Recall that $M$ is \emph{geodesic} if every pair of points $p,q\in M$ may be joined by an isometric copy of $[0,d(p,q)]\subset\mathbb{R}$, and that for complete $M$ this is equivalent to $[p,q]\neq\set{p,q}$ for any pair of different points $p,q\in M$ (see Proposition 4.1 in \cite{GaPrRu_2018}). Thus, Theorem \ref{tm:main_theorem} implies that $M$ is geodesic whenever $\ball{\lipfree{M}}$ has no extreme points. A positive answer to Question \ref{q:ext} would show that the opposite implication is also true.

Other extremal objects in $\ball{\lipfree{M}}$ which remain uncharacterized at the time of this writing are exposed points. In particular, it is not known whether they must be elementary molecules. It is shown in \cite{GPPR_2018} that all extreme points of $\ball{\lipfree{M}}$ are exposed under various circumstances, all of which involve $\lipfree{M}$ being a dual space. Extreme molecules are also automatically exposed (in fact, strongly exposed) in the cases \ref{case:e2_ultrametric} and \ref{case:e2_tree} listed in Proposition \ref{pr:state_of_the_art}. In view of all these partial results, it is natural to ask:

\begin{question}
\label{q:exp}
Are all extreme points of $\ball{\lipfree{M}}$ exposed?
\end{question}

\begin{remark*}
After completion of the present manuscript, we have learned that Petitjean and Proch\'azka \cite{PePr_2018}, and independently Garc\'ia-Lirola \cite{garcia-lirola_pc}, have found proofs by which from Theorem \ref{tm:f_intersection} it follows that every elementary molecule defined by points forming a trivial metric segment is in fact exposed. Thus, a positive answer to Question \ref{q:ext} would also imply a positive answer to Question \ref{q:exp}.
\end{remark*}

Finally, we would like to know whether the existence of supports, or equivalently the intersection property proved in Theorem \ref{tm:f_intersection}, holds for a more general class than bounded metric spaces. We are not aware of any counterexample.

\begin{question}
\label{q:supp}
Do the properties from Proposition \ref{prop: intersections_iff_supports} hold for any complete metric space $M$?
\end{question}

\section*{Acknowledgements}

This work was supported by the grant GA\v CR 18-00960Y. The first author was also partially supported by the Spanish Ministry of Economy and Competitiveness under Grant \mbox{MTM2014-57838-C2-1-P}.

This research was conducted while the first author visited the Faculty of Information Technology at the Czech Technical University in Prague in 2018. He is grateful for the hospitality and excellent working conditions during his stay.

The authors are grateful to L. C. Garc\'ia-Lirola, A. Proch\'azka, A. Rueda Zoca and to the anonymous referee for many valuable comments and remarks leading to improvement of this document. Example \ref{ex:all_ext_unprev} was suggested by A. Rueda Zoca.

% BIBLIOGRAPHY

\end{document}